\documentclass[sn-mathphys,Numbered]{sn-jnl}


\usepackage{graphicx}%
\usepackage{multirow}%
\usepackage{amsmath,amssymb,amsfonts}%
\usepackage{amsthm}%
\usepackage{mathrsfs}%
\usepackage[title]{appendix}%
\usepackage{xcolor}%
\usepackage{textcomp}%
\usepackage{manyfoot}%
\usepackage{booktabs}%
\usepackage{algorithm}%
\usepackage{algorithmicx}%
\usepackage{algpseudocode}%
\usepackage{listings}%
\usepackage{natbib}



\def\bfx{{\bf x}}

\theoremstyle{thmstyleone}%
\newtheorem{theorem}{Theorem}
%

\theoremstyle{thmstyletwo}%
\newtheorem{remark}{Remark}%
\newtheorem{lemma}{Lemma}
\theoremstyle{thmstylethree}%

\raggedbottom

\begin{document}

\title[The Optimal Linear B-splines Approximation via Kolmogorov Superposition Theorem and its Application]{The Optimal Linear B-splines Approximation via Kolmogorov Superposition Theorem and its Application}


\author[1]{\fnm{Ming-Jun} \sur{Lai}}\email{mjlai@uga.edu}

\author*[2]{\fnm{Zhaiming} \sur{Shen}}\email{zshen49@gatech.edu}


\affil[1]{\orgdiv{Department of Mathematics}, \orgname{University of Georgia}, 
\orgaddress{ \city{Athens}, \postcode{30602}, \state{Georgia}, \country{U.S.A}}}

\affil[2]{\orgdiv{School of Mathematics}, \orgname{Georgia Institute of Technology}, \orgaddress{\city{Atlanta}, \postcode{30332}, \state{Georgia}, \country{U.S.A}}}



\abstract{We propose a new approach for approximating functions in $C([0,1]^d)$ via Kolmogorov superposition theorem (KST) based on the linear spline interpolation of the outer function in the Kolmogorov representation. We improve the results in \cite{LaiShenKST21} by showing that the optimal rate of approximation based on our proposed approach is $O(\frac{1}{n^2})$, where $n$ denotes the number of knots over $[0,1]$. Furthermore, the approximation constant scales linearly with the dimension $d$. 
We show that there exists a dense subclass in $C([0,1]^d)$ whose approximation can achieve such optimal rate,
and the number of parameters needed in such approximation is at most $O(nd)$. Thus, there is no curse of dimensionality when approximating functions in this subclass. Moreover, for $d\geq 4$, we apply tensor product spline denoising technique to denoise KB-splines and get the smooth LKB-splines. We use LKB-splines as basis to approximate functions for the cases when $d=4$ and $d=6$, which extends the results in \cite{LaiShenKST21}. In addition, we validate via numerical experiments that fewer than $O(nd)$ function values are needed to achieve the rate  $O(\frac{1}{n^\beta})$ for some $\beta>0$
based on the smoothness of the outer function. Finally, we demonstrate that our approach can be applied to numerically solving partial differential equation such as the Poisson equation with accurate results. }

\keywords{Kolmogorov superposition theorem, The curse of dimensionality, B-spline approximation, Tensor product splines denoising, The Poisson equation}


\pacs[MSC Classification]{41A15, 41A63, 15A23}

\maketitle

\section{Introduction}\label{intro}

Fast and effective computation of high dimensional function approximation has been at the research frontier since the advent of deep neural network. The primary challenge lies in overcoming the curse of dimensionality, a longstanding computational bottleneck. Despite decades of effort, progress has been limited, with only a few notable advances. One such breakthrough is the introduction of the Barron class of functions, as proposed in \cite{Barron93, Barron94}. It is also well explained in \cite{Pinkus99}. The most recent results on neural network approximation of this class of functions can be found in \cite{Siegel20} and \cite{Klusowski18}. 

Recently, the authors of \cite{LaiShenKST21} demonstrated that certain subclasses of functions, which are dense in $C([0,1]^d)$, can be effectively approximated by applying Kolmogorov superposition theorem without suffering from the curse of dimensionality. In this work, we shift our focus to exploring this promising direction. Let us first recall the Kolmogorov superposition theorem (KST), we introduce two versions of KST which appear in \cite{Kolmogorov57} and \cite{Lorentz66}, respectively.

\begin{theorem}[Kolmogorov Superposition Theorem -- original version \cite{Kolmogorov57}] 
\label{kolmogorovOriginal}
Let $f\in C([0,1]^d)$, then there exist continuous functions $g_q:\mathbb{R}\to\mathbb{R}$ and $\phi_{qp}:[0,1]\to\mathbb{R}$ such that
\begin{equation}
\label{represoriginal}
f(x_1,\cdots,x_d)=\sum_{q=0}^{2d} g_q\left(\sum_{p=1}^d\phi_{qp}(x_p)\right).
\end{equation}
\end{theorem}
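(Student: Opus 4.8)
The plan is to follow the classical strategy: construct the inner functions $\phi_{qp}$ \emph{independently of $f$} (they are universal), and then build the outer functions $g_q$ by an iterative successive-approximation scheme driven by a one-step error-reduction lemma. The organizing idea is that the $\phi_{qp}$ should encode a family of $2d+1$ mutually shifted partitions of $[0,1]^d$ into small cubes, arranged so that the associated embeddings $\Phi_q(\mathbf{x})=\sum_{p=1}^d\phi_{qp}(x_p)$ map distinct small cubes to \emph{disjoint} intervals of $\mathbb{R}$ for a controlled majority of the indices $q$.

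First I would construct the inner functions. Fixing a rapidly growing sequence of grid resolutions, and for each $q\in\{0,1,\dots,2d\}$ a partition of $[0,1]$ into intervals shifted by an amount depending on $q$, I would define $\phi_{qp}$ as continuous, strictly increasing functions obtained by a nested-interval limiting procedure, with values on the grid intervals chosen to be rationally independent across $p$ so that $\Phi_q$ separates the cubes of the $q$-th grid. The key combinatorial lemma, which I expect to be the main obstacle, is a pigeonhole statement: the $2d+1$ shifts can be chosen so that each individual coordinate $x_p$ lands near a partition boundary for at most one value of $q$; summing over the $d$ coordinates spoils at most $d$ layers, leaving at least $d+1$ of the $2d+1$ values of $q$ for which $\mathbf{x}$ sits in the \emph{interior} of a grid cube, away from all boundaries. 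Establishing this majority-covering property simultaneously for all points, at every scale, while keeping the $\phi_{qp}$ continuous and monotone, is the delicate heart of the argument.

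With the inner functions in hand, the next step is a one-step approximation lemma: for any $h\in C([0,1]^d)$ there exist continuous $g_q$ with $\sum_{q=0}^{2d}\|g_q\|_\infty\le C\|h\|_\infty$ such that
\[
\left\| h-\sum_{q=0}^{2d} g_q\bigl(\Phi_q(\cdot)\bigr) \right\|_\infty \le \theta\,\|h\|_\infty
\]
for a fixed contraction factor $\theta<1$ depending only on $d$. To prove it, I would pass to a grid scale fine enough that $h$ varies by less than $\varepsilon\|h\|_\infty$ across each cube $Q$; on the disjoint image intervals $\Phi_q(Q)$ I set $g_q$ to contribute the fraction $\tfrac{1}{2d+1}$ of the value of $h$ at the center of $Q$, interpolating continuously in the gaps. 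The majority-covering lemma then guarantees that at each point at least $d+1$ layers deliver the correct value while at most $d$ layers are spoiled near boundaries, and a direct estimate bounds the residual by $\theta\|h\|_\infty$.

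Finally I would iterate. Applying the one-step lemma to $h_0=f$ yields $g_q^{(0)}$ and a residual $h_1=f-\sum_q g_q^{(0)}(\Phi_q)$ with $\|h_1\|_\infty\le\theta\|f\|_\infty$; repeating on $h_1$, and so on, gives $\|h_k\|_\infty\le\theta^k\|f\|_\infty$. Setting $g_q:=\sum_{k\ge0}g_q^{(k)}$, the bound $\sum_q\|g_q^{(k)}\|_\infty\le C\theta^k\|f\|_\infty$ makes each series converge uniformly to a continuous function, while the telescoping residuals tend to zero, yielding $f=\sum_{q=0}^{2d}g_q(\Phi_q)$ as required. The continuity and monotonicity of the $\phi_{qp}$ from the first step ensure each $\Phi_q$ is continuous, so the entire representation lies in the stated function classes.
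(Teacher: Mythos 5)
The paper does not prove Theorem~\ref{kolmogorovOriginal}: it is quoted as a known result from Kolmogorov's 1957 paper, so there is no in-house argument to compare yours against. Judged on its own terms, your outline faithfully reproduces the classical Kolmogorov--Lorentz proof: universal inner functions built from $2d+1$ mutually shifted families of partitions into small cubes, the pigeonhole observation that at most $d$ of the $2d+1$ layers can be spoiled at any point (at most one per coordinate), a one-step lemma assigning each good layer the value $\frac{1}{2d+1}h(\mathbf{y}_Q)$ on the pairwise disjoint image intervals $\Phi_q(Q)$, and geometric iteration of the residual. The arithmetic of the contraction does close: the good layers (at least $d+1$ of them) reproduce at least $\frac{d+1}{2d+1}h(\mathbf{x})$ up to $O(\varepsilon)\|h\|_\infty$, the bad layers contribute at most $\frac{d}{2d+1}\|h\|_\infty$ in absolute value, so the residual is at most $\bigl(\frac{2d}{2d+1}+O(\varepsilon)\bigr)\|h\|_\infty$ and $\theta<1$ for $\varepsilon$ small; summing $g_q=\sum_{k\ge 0}g_q^{(k)}$ with $\|g_q^{(k)}\|_\infty\le\frac{1}{2d+1}\theta^k\|f\|_\infty$ then gives uniform convergence and the representation.

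That said, what you have written is a plan rather than a proof, and the step you yourself flag as ``the delicate heart'' is essentially the entire theorem. Everything rests on actually producing the $\phi_{qp}$: continuous, monotone, defined as a limit over a nested hierarchy of refinements, such that at every sufficiently fine scale and for every $q$ the images $\Phi_q(Q)$ of distinct cubes are pairwise disjoint, while the shifted-gap majority-covering property holds simultaneously at all scales. This forces an inductive construction in which the values assigned at stage $k+1$ are confined to intervals left free at stage $k$ (so as to preserve both monotonicity and the separation already achieved), together with rational-independence or interval-disjointness bookkeeping that survives the passage to the limit; none of this is carried out, and it is precisely the part of the argument that occupied Kolmogorov and that Lorentz and Sprecher later reworked. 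A second, smaller omission: the grid scale in your one-step lemma must be chosen according to the modulus of continuity of the current residual $h_k$, so the scales used across the iteration are not known in advance; the inner functions must therefore deliver separation and majority covering at \emph{all} sufficiently fine levels of the fixed hierarchy, an additional constraint your construction should state and verify explicitly.
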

The significance of this surprising result can be summarized succinctly: Every continuous multivariate function can be obtained from univariate continuous functions using compositions and additions. There have been many improvements of KST over the years. Lorentz \cite{Lorentz62} pointed out that the outer function $g_q$ can be chosen to be the same, while Sprecher \cite{Sprecher63} showed that one can take $\phi_{qp}=\lambda_p\phi_q$. Henkin \cite{Henkin64} and Fridman \cite{Fridman67}, respectively, pointed out that the inner functions $\phi_{qp}$ can be chosen to be Hölder continuous with exponent $\alpha\in (0,1)$ and Lipschitz continuous. 
Sprecher \cite{Sprecher65a, Sprecher65b, Sprecher66, Sprecher72} also showed that inner functions can be replaced by one single inner function with an appropriate shift in its argument through the constructive form of KST. An excellent explanation of the history about the development of KST can be found in \cite{Morris21}. We now turn our attention to the Lorentz's version of KST \cite{Lorentz66}, which is more useful for the development of our approach.

\begin{theorem}[Kolmogorov Superposition Theorem -- Lorentz's version \cite{Lorentz66}] 
\label{kolmogorov}
There exist $0<\lambda_p\leq 1$, $p=1,\cdots, d$, and strictly increasing $\alpha$-Hölder continuous functions $\phi_q(x): [0,1] \to [0,1]$, $q=0, \cdots, 2d$, with exponent $\alpha\in (0,1)$, such that for every $f\in C([0,1]^d)$, 
there exists a continuous function $g\in C([0,d])$, such that 
\begin{equation}
\label{repres}
f(x_1,\cdots,x_d)=\sum_{q=0}^{2d} g\left(\sum_{p=1}^d\lambda_p\phi_q(x_p)\right).
\end{equation}
\end{theorem}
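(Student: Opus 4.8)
The plan is to decouple the two structurally different halves of the statement. The constants $\lambda_p$ and the inner functions $\phi_q$ are \emph{universal}: they must be fixed once and for all so that the representation works for \emph{every} $f$ simultaneously, and they carry the geometric content of the theorem. The outer function $g$, by contrast, is constructed afterwards and separately for each given $f$. So I would first build the universal objects with a single combinatorial ``separation'' property, and only then produce $g$ by an iterative approximation scheme that exploits that property.

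\textbf{Construction of the inner data.} Fix constants $0<\lambda_p\le 1$ that are rationally independent, so that a sum $\sum_p \lambda_p t_p$ cannot coincide for two distinct admissible tuples except in a controlled way. I would construct each strictly increasing $\phi_q\colon[0,1]\to[0,1]$ as the uniform limit of piecewise-linear strictly increasing functions defined over a nested family of partitions $\{P_k\}$ of $[0,1]$ into subintervals of length comparable to $\delta_k$, with $\delta_k\to 0$. Writing $\Phi_q(x)=\sum_{p=1}^d \lambda_p\phi_q(x_p)$, the design goal is the following \emph{$(d+1)$-out-of-$(2d+1)$ separation property}: the $2d+1$ grids are mutually shifted so that for every $x\in[0,1]^d$ at least $d+1$ of the indices $q$ are ``good'', meaning that the level-$k$ cube of the $q$-th grid containing $x$ is mapped by $\Phi_q$ onto an interval that is disjoint from the $\Phi_q$-images of all other cubes of that grid. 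The Hölder exponent $\alpha\in(0,1)$ is then secured by calibrating the increment of $\phi_q$ across one subinterval to scale like $\delta_k^{\alpha}$, balancing oscillation against interval length at every level.

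\textbf{Construction of $g$.} Given the universal $\phi_q$, fix $f$ and build $g=\sum_{m\ge 0}\delta g_m$ by successive approximation. At stage $m$ let $r_m=f-\sum_{q=0}^{2d} g^{(m)}\!\big(\Phi_q\big)$ be the current residual, where $g^{(m)}=\sum_{j<m}\delta g_j$. Choosing the level $k=k(m)$ fine enough that $r_m$ is nearly constant on each cube (uniform continuity), define $\delta g_m$ on each good image-interval to equal $\tfrac{1}{2d+1}$ times the value of $r_m$ on the corresponding cube, and $0$ elsewhere. Then at each point $x$ the at least $d+1$ good terms recover the fraction $\tfrac{d+1}{2d+1}$ of $r_m(x)$, while the at most $d$ bad terms contribute in absolute value at most $\tfrac{d}{2d+1}\|r_m\|_\infty$. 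This yields the contraction estimate $\|r_{m+1}\|_\infty\le \tfrac{2d}{2d+1}\,\|r_m\|_\infty$, with fixed factor $\tfrac{2d}{2d+1}<1$. Hence $\|\delta g_m\|_\infty$ decays geometrically, the series $\sum_m\delta g_m$ converges uniformly, and the limit $g\in C([0,d])$ satisfies $f=\sum_{q=0}^{2d} g(\Phi_q)$, as required.

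\textbf{Main obstacle.} The analytic iteration for $g$ is comparatively routine once separation is in hand; the genuinely delicate step is the simultaneous construction of the inner functions. One must guarantee, uniformly over $x\in[0,1]^d$ and over all levels $k$, the three competing requirements: strict monotonicity, the $(d+1)$-out-of-$(2d+1)$ separation of cube images, and the prescribed Hölder regularity. The tension is that separation wants the grids to spread images apart quickly (large jumps), whereas the Hölder bound caps how fast $\phi_q$ may grow across a shrinking interval; reconciling these at every level, while using the rational independence of the $\lambda_p$ to rule out accidental overlaps of the $\Phi_q$-images, is where the real work lies.
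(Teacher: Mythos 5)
This theorem is stated in the paper as a cited classical result (Lorentz \cite{Lorentz66}); the paper gives no proof of it, so there is no in-paper argument to compare against. Judged on its own, your sketch faithfully reproduces the architecture of the classical Kolmogorov--Lorentz proof: universal inner functions built as uniform limits of piecewise-linear increasing functions over nested shifted partitions, a ``$(d+1)$-out-of-$(2d+1)$'' separation property for the images of cubes under $\Phi_q=\sum_p\lambda_p\phi_q(x_p)$, and then a successive-approximation construction of $g$ with the contraction estimate $\|r_{m+1}\|_\infty\le\bigl(\tfrac{2d}{2d+1}+\varepsilon\bigr)\|r_m\|_\infty$. Both the choice of the fraction $\tfrac{1}{2d+1}$ and the bookkeeping of good versus bad indices are exactly as in the standard argument, and you correctly locate the role of the rational independence of the $\lambda_p$ in keeping the image intervals disjoint.

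The genuine gap is that the part you defer --- the simultaneous construction of strictly increasing, $\alpha$-H\"older $\phi_q$ realizing the separation property at every level $k$ --- is not a routine verification but is the actual content of the theorem; everything else in your sketch is a formal consequence of it. As written, the separation property is a ``design goal'' whose consistency (monotonicity versus separation versus the H\"older cap on increments, uniformly in $k$) is asserted rather than established, so the proof is incomplete precisely at the point where Kolmogorov's and Lorentz's work lies. Two smaller issues: defining $\delta g_m$ to be the prescribed value on the good image intervals ``and $0$ elsewhere'' generally destroys continuity of $g$; one must interpolate continuously between the disjoint image intervals while keeping $\|\delta g_m\|_\infty\le\tfrac{1}{2d+1}\|r_m\|_\infty$. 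Also, the statement requires $\phi_q$ to map $[0,1]$ into $[0,1]$ and $g$ to live on $[0,d]$ with the same $g$ in all $2d+1$ summands; your iteration produces this automatically, but you should say explicitly that the good image intervals for different $q$ may overlap inside $[0,d]$ and that the construction of $\delta g_m$ must be consistent across all $q$ simultaneously, since there is only one outer function.
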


\begin{remark}
    The exponent $\alpha$ in the Lorentz's version of KST can be chosen as $\alpha=\log_{4d+2} 2$ according to the construction in \cite{Lorentz66} and \cite{bryant2008kolmogorov}. We follow their construction by using the same $\alpha$ for our implementation of functions $\phi_q$ numerically. In general, $\alpha$ can be chosen as any value between $0<\alpha<1$ independently of dimension $d$.
\end{remark}

Some notable features of the representation formula (\ref{repres}) are the following. Firstly, there is only one outer function $g$ associated with $f$. Secondly, the number $2d+1$ of summands can not be further reduced \cite{Ostrand65,Sternfeld85}. Thirdly, the inner functions cannot be chosen to be continuously differentiable \cite{Vitushkin54,Vitushkin64,Lorentz62}. The upshot for this representation is: for any continuous function $f\in C([0, 1]^d)$, there is a continuous function $g_f\in C([0, d])$ so that $f$ can be represented by $g_f$ via (\ref{repres}). Conversely, given any continuous function $g\in C([0, d])$, we can produce a continuous function 
$f_g \in C([0,1]^d)$ by using the representation formula (\ref{repres}). Such correspondence between $f$ and $g$ is one-to-one. Therefore we can use what we understand about univariate continuous functions to understand multivariate continuous functions. 

It is worth noting that KST also has some useful topology and machine learning interpretations. KST essentially establishes that all $d$ dimensional compact metrizable spaces can be embedded into $\mathbb{R}^N$ if and only if $N\geq 2d+1$. KST also guarantees that any continuous statistical or machine learning model, after a suitable embedding, is a sum of generalized additive models. There have been many generalizations and extensions of KST over the past few decades. Ostrand \cite{Ostrand65} showed that KST holds on compact metric spaces. Doss \cite{Doss77} and Demko \cite{Demko77} extended KST to $\mathbb{R}^n$ for unbounded and bounded continuous functions, respectively. Feng \cite{Feng10} generalized KST to locally compact and finite dimensional separable metric spaces. 

It is straightforward to see that the representation formula (\ref{repres}) mimics the structure of a two-layer neural network where the inner and outer functions can be considered as activation functions. However, there have been debates for decades on whether such a representation via KST is useful. Girosi and Poggio \cite{Girosi1989} claimed that some degree of smoothness is required for inner and outer functions in order for the approximation to generalize and stabilize against noise. Lin and Unbehauen \cite{Lin1993} made a similar conclusion by noting that all information carried by $f$ must be contained in the univariate function $g$ hence learning the latter is not any easier than learning the former. On the other hand, Køurkovà \cite{Kurkova91,Kurkova92} countered some of the criticisms from Girosi and Poggio by giving a constructive way to approximate the univariate outer function $g$ through linear combinations of the smooth sigmoid function. They also bounded the number of units needed for a desired approximation. This has in turn generated further interest in the study of neural network and approximation.

Indeed, KST has been actively studied which echoes the fast development of neural network computing \cite{Cybenko89,MM92,Pinkus99}. Hecht-Nielsen \cite{H87} was among the first to draw a connection between KST and neural networks. This inspired much of the later works on universality of two-layer neural networks. However, Hecht-Nielsen was doubtful about the direct usefulness of this connection because no construction of the outer function was known then and he mentioned the possibility of learning the outer function from input-output examples. Later on, Igelnik and Parikh \cite{IP03} proposed a neural network algorithm using spline functions to approximate both the inner and outer functions. 

More recently, active research has been conducted on neural network approximation via KST and achieves promising results \cite{Maiorov99,Guliyev2018,Montanelli2020,Schmidt2021,fakhoury2022exsplinet}. However, these results are not directly based on the representation formula (\ref{kolmogorov}) and can be impractical to implement. The authors in \cite{liu2024kan} proposed Kolmogorov-Arnold Networks (KAN), in which the activation functions become learnable rather than the weights in the traditional feed-forward neural networks. While this innovation has demonstrated promising numerical results in certain experiments, it remains unclear to what extent the new approach can fundamentally outperform the traditional feed-forward neural networks.

Besides all of these, the authors in \cite{LaiShenKST21} introduced a class called Kolmogorov-Lipschitz (KL) continuous functions and proposed LKB-splines for approximating such functions with the rate $\mathcal{O}(1/n)$ and complexity $\mathcal{O}(dn)$. Note that LKB-splines are a smooth version of KB-splines and the KB-splines are similar to the Kolmogorov spline network in the literature (cf. \cite{IP03}). These KB-splines are very noisy and deemed not useful at all in practice. One of the significant features 
of the work \cite{LaiShenKST21} is the denoising of KB-splines to get LKB-splines in $\mathbb{R}^2$ 
or $\mathbb{R}^3$, which are bivariate or trivariate spline
functions (cf. \cite{LS07}) after a denoising technique based on penalized least squares method (cf. \cite{LS09}). In this paper, we would like to follow up along these directions. 


The remaining part of this paper is structured as follows. In Section~\ref{KHolder}, we introduce a subclass of continuous functions, called Kolmogorov-Hölder (KH) class. We show that this class is dense in $C([0,1]^d)$ by showing a subclass of this class is dense. We also introduce the linear KB-spline functions based on the linear interpolation of the outer function and show that there is a dense subclass of $C([0,1]^d)$ which can be approximated by using KB-splines with the optimal rate $\mathcal{O}(1/n^2)$. In Section~\ref{TD}, we introduce a tensor product spline denoising method to smooth the KB-spline basis and get the corresponding LKB-splines as basis for our approximation scheme. In section \ref{experiments}, we demonstrate the numerical results for function approximation in $d=4$ and $d=6$ by using linear LKB-splines as basis. In Section~\ref{secPDE}, we show the numerical method of solving Poisson equation based on LKB-splines as one application of our approach. Finally, in Section~\ref{secCon}, we conclude the paper and point out some future research directions.

\section{Kolmogorov-Hölder class} \label{KHolder}
We will consider a general class of continuous functions called Kolmogorov-Hölder (KH) class. Let us call the functions $g$ and $\phi_q$, $q=0,\cdots,2d$, in (\ref{kolmogorov}) the outer function and inner functions respectively.
Suppose $\beta\in (0,1)$, for each  function $f\in C([0, 1]^d)$, we define 
\begin{equation}
\label{G1}
\text{KH}_{\beta}:=\{f:  \hbox{ the outer function $g$ is $\beta$-Hölder continuous} \}
\end{equation}
to be the class of Kolmogorov-Hölder continuous functions with exponent $\beta$. Recall that we say a function $f:[0,1]^d\to\mathbb{R}$ is in $C^{0,\alpha}([0,1]^d)$ if 
\begin{equation}
\label{Lipalpha}
  \sup_{\mathbf{x},\mathbf{y}\in [0,1]^d}  \frac{|f(\mathbf{x})-f(\mathbf{y})|}{\|\mathbf{x}- \mathbf{y}\|^\alpha} <\infty. 
\end{equation}
One can show 
\begin{theorem}
Let $f\in C([0,1]^d)$ has the KST representation (\ref{repres}). Suppose $f\in \text{KH}_{\beta}$ for some $\beta\in (0,1)$, then $f\in C^{0,\alpha\beta} ([0,1]^d)$ with $\alpha$ being the Hölder exponent for the inner function $\phi_q$ via representation (\ref{repres}).
\end{theorem}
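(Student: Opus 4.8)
The plan is to exploit the elementary fact that a composition of an $\alpha$-Hölder function with a $\beta$-Hölder function is $(\alpha\beta)$-Hölder, together with the observation that the representation (\ref{repres}) is a finite sum of exactly $2d+1$ such compositions. Write $\psi_q(\mathbf{x}) := \sum_{p=1}^d \lambda_p \phi_q(x_p)$ for $q=0,\dots,2d$, so that $f(\mathbf{x}) = \sum_{q=0}^{2d} g(\psi_q(\mathbf{x}))$. Since $\phi_q:[0,1]\to[0,1]$ and $0<\lambda_p\le 1$, each $\psi_q$ maps $[0,1]^d$ into $[0,\sum_{p=1}^d \lambda_p]\subseteq[0,d]$, so every argument $\psi_q(\mathbf{x})$ lies in the domain $[0,d]$ of $g$; this containment is what legitimizes applying the Hölder bound for $g$ at these points.

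First I would estimate the inner map. Fix $\mathbf{x},\mathbf{y}\in[0,1]^d$. Using the triangle inequality, $\lambda_p\le 1$, and the $\alpha$-Hölder continuity of $\phi_q$ with a common constant $C_\phi$ (finitely many increasing $\alpha$-Hölder functions admit a single constant),
\begin{equation*}
|\psi_q(\mathbf{x})-\psi_q(\mathbf{y})| \le \sum_{p=1}^d \lambda_p\,|\phi_q(x_p)-\phi_q(y_p)| \le C_\phi \sum_{p=1}^d |x_p-y_p|^\alpha.
\end{equation*}
Because $|x_p-y_p|\le \|\mathbf{x}-\mathbf{y}\|$ for each coordinate, each summand is at most $\|\mathbf{x}-\mathbf{y}\|^\alpha$, so the sum is bounded by $C_\phi\, d\, \|\mathbf{x}-\mathbf{y}\|^\alpha$.

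Next I would feed this into the outer function. Let $C_g$ be the $\beta$-Hölder constant of $g$ on $[0,d]$. Then, using the monotonicity of $t\mapsto t^\beta$ on $[0,\infty)$, for each $q$
\begin{equation*}
|g(\psi_q(\mathbf{x}))-g(\psi_q(\mathbf{y}))| \le C_g\,|\psi_q(\mathbf{x})-\psi_q(\mathbf{y})|^\beta \le C_g\,(C_\phi d)^\beta\, \|\mathbf{x}-\mathbf{y}\|^{\alpha\beta}.
\end{equation*}
Summing over the $2d+1$ values of $q$ and applying the triangle inequality gives $|f(\mathbf{x})-f(\mathbf{y})| \le (2d+1)\,C_g\,(C_\phi d)^\beta\, \|\mathbf{x}-\mathbf{y}\|^{\alpha\beta}$, which is precisely the assertion that $f\in C^{0,\alpha\beta}([0,1]^d)$, with an explicit Hölder seminorm.

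The argument is a clean chaining of Hölder estimates, so I expect no deep obstacle; the only steps requiring genuine care are (i) confirming that each argument $\psi_q(\mathbf{x})$ remains in $[0,d]$ so that the Hölder bound on $g$ applies, and (ii) the passage from the coordinatewise quantity $\sum_p |x_p-y_p|^\alpha$ to $\|\mathbf{x}-\mathbf{y}\|^\alpha$, which is exactly where the linear dimension factor $d$ (and hence the $d^\beta$ appearing in the final constant) is introduced. A secondary point to flag is that the output exponent is $\alpha\beta$ rather than $\beta$: the loss of smoothness reflects the roughness of the inner functions $\phi_q$, consistent with the classical fact that they cannot be taken continuously differentiable.
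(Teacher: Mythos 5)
Your proof is correct and follows essentially the same chaining argument as the paper: triangle inequality over the $2d+1$ summands, the $\beta$-H\"older bound on $g$, then the $\alpha$-H\"older bound on each $\phi_q$. The only cosmetic difference is that the paper distributes the exponent $\beta$ across the coordinate sum via the subadditivity $\left|\sum_i a_i\right|^\beta \le \sum_i |a_i|^\beta$ and leaves its bound in terms of $\sum_i |x_i-y_i|^{\alpha\beta}$, whereas you first bound the inner difference by $C_\phi d\,\|\mathbf{x}-\mathbf{y}\|^\alpha$ and then raise to the power $\beta$; both yield the same conclusion with comparable constants.
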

\begin{proof}
Let $g$ and $\phi_q$, $q=0,\cdots,2d$, be the functions as defined in (\ref{repres}).
Suppose $\mathbf{x},\mathbf{y}\in [0,1]^d$, and $(x_1,\cdots,x_d)=\mathbf{x}\neq\mathbf{y}=(y_1,\cdots, y_d)$. Then
    \begin{align*}
        \left|f(\mathbf{x}) - f(\mathbf{y})\right|&=\left|\sum_{q=0}^{2d}g\left(\sum_{i=1}^d \lambda_i\phi_q(x_i)\right)-\sum_{q=0}^{2d}g\left(\sum_{i=1}^d \lambda_i\phi_q(y_i)\right)\right| \\
        & \leq \sum_{q=0}^{2d}\left|g\left(\sum_{i=1}^d \lambda_i\phi_q(x_i)\right)-g\left(\sum_{i=1}^d \lambda_i\phi_q(y_i)\right)\right| \\
        & \leq \sum_{q=0}^{2d}C_1\left|\sum_{i=1}^d \lambda_i\phi_q(x_i)-\sum_{i=1}^d \lambda_i\phi_q(y_i)\right|^{\beta} \leq\sum_{q=0}^{2d}C_1\sum_{i=1}^d\lambda_i^{\beta}\left|\phi_q(x_i)-\phi_q(y_i)\right|^{\beta}  \\
        & \leq \sum_{q=0}^{2d}C_1\sum_{i=1}^d\lambda_i^{\beta}C_2^{\beta}|x_i-y_i|^{\alpha\beta} \leq (2d+1)C_1 C_2^{\beta}\sum_{i=1}^d|x_i-y_i|^{\alpha\beta},
    \end{align*}
for some constants $C_1,C_2>0$.
This completes the proof. 
\end{proof}

Next, let us introduce two important subclasses of KH function class: Kolmogorov-polynomials and Kolmogorov B-splines (KB-splines).

\subsection{Kolmogorov-polynomials}
Let us define the Kolmogorov-polynomial as
\begin{equation}
Kp_n(x_1,\cdots,x_d)=\sum_{q=0}^{2d}p_n\left(\sum_{i=1}^d\lambda_i\phi_q(x_i)\right),
\end{equation}
where the function $p_n$ is a univariate polynomial. We call it a Kolmogorov-monomial if $p_n(t):=t^n$, $n\geq 0$. Figure~\ref{Kpolynomials} shows some plots of different Kolmogorov-monomials with and without using the denoising/smoothing technique described in \cite{LaiShenKST21}. {In those plots, we fix the scalars $\lambda_1=1$, $\lambda_2=1/\sqrt{2}$ in the representation (\ref{repres}), and we implement the inner functions $\phi_q(x), q=0,1,2,3,4$, the same way as described in \cite{Lorentz66} and \cite{bryant2008kolmogorov}.}  The significance of Kolmogorov-monomials is that the $span\{Kp_n\}_{n\geq 0}$ is dense in $C([0,1]^d)$. Let us call this result the Kolmogorov-Weierstrass theorem.

\begin{figure}[t] 
    \centering
    \begin{tabular}{cc}
    \includegraphics[width=0.45\textwidth]{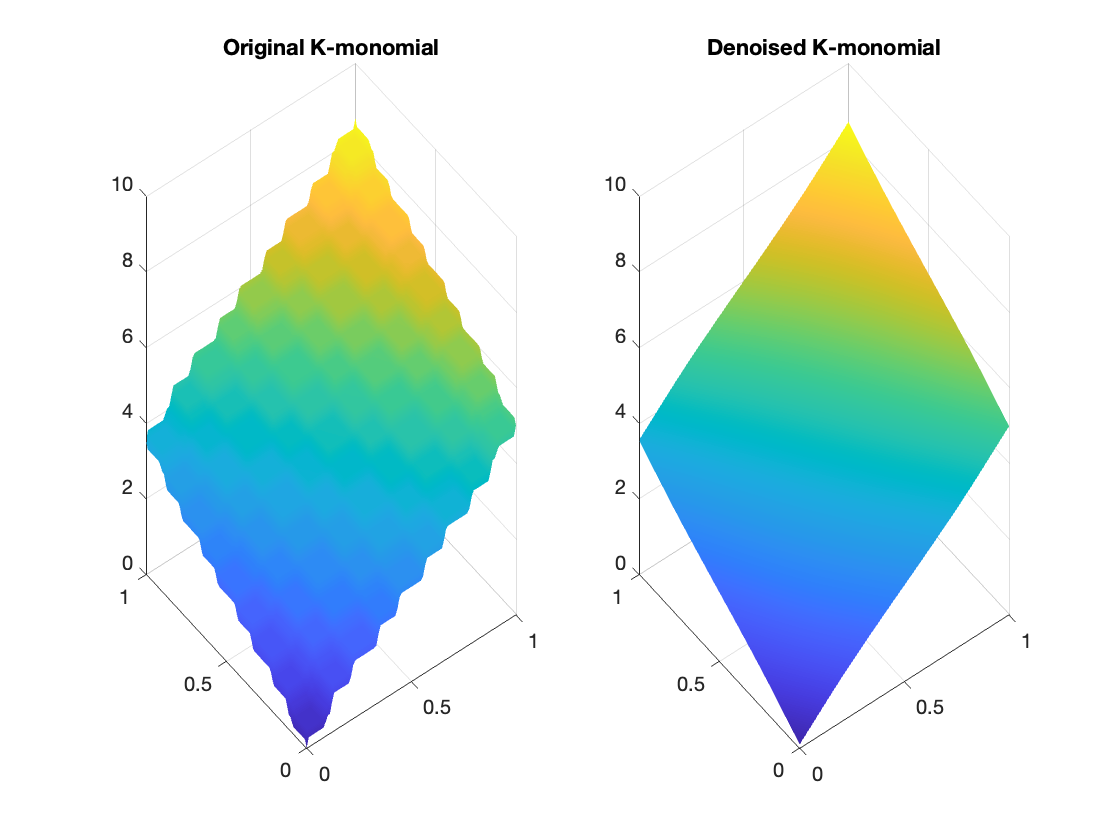} &
    \includegraphics[width=0.45\textwidth]{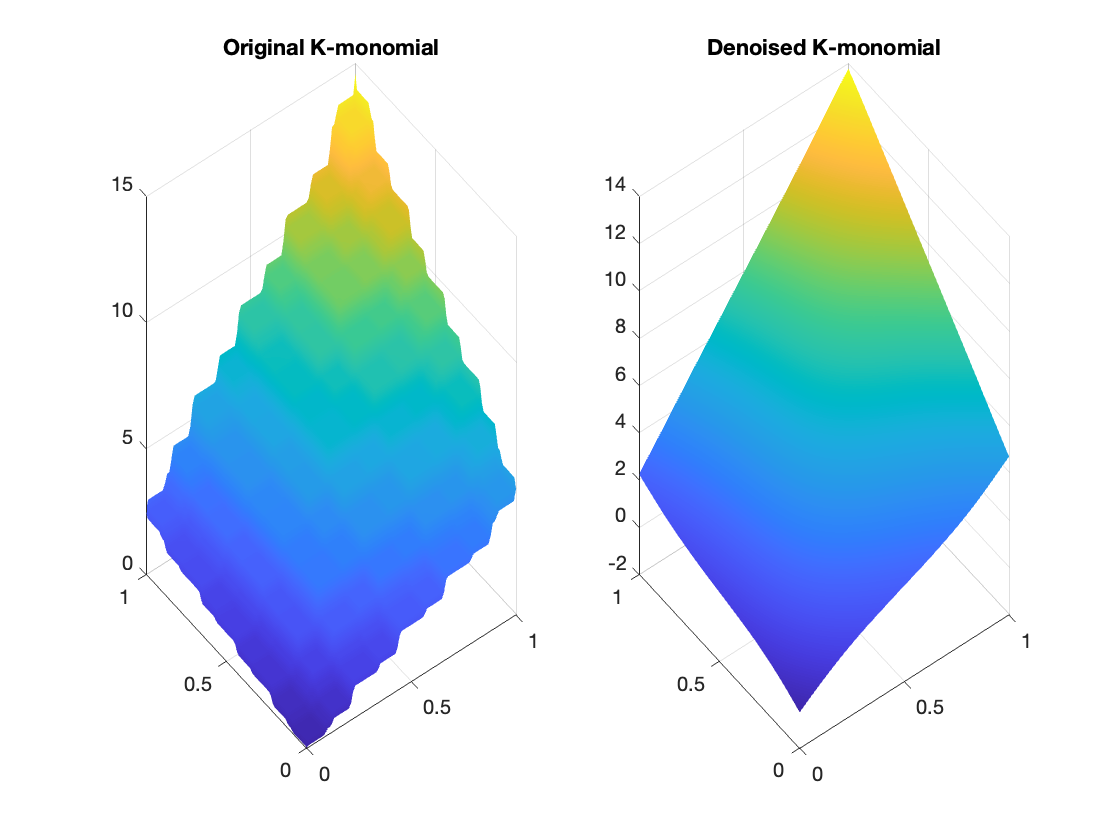} \\
    \includegraphics[width=0.45\textwidth]{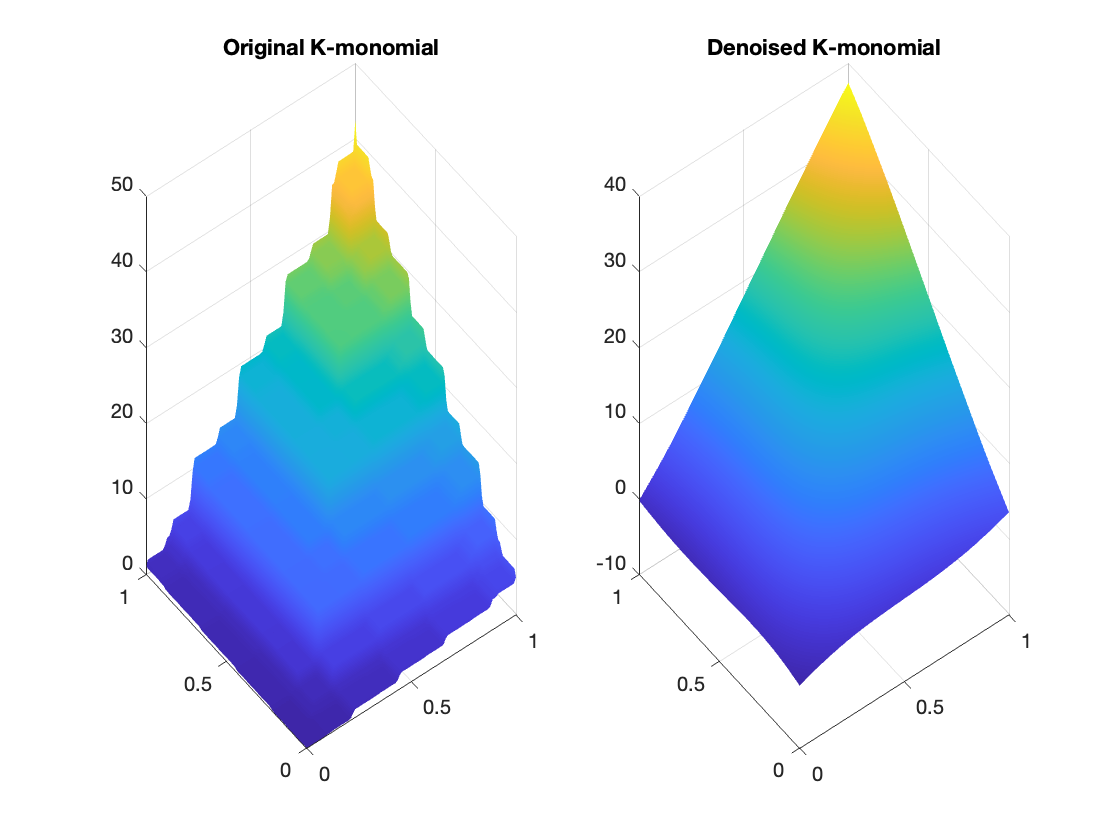} &
    \includegraphics[width=0.45\textwidth]{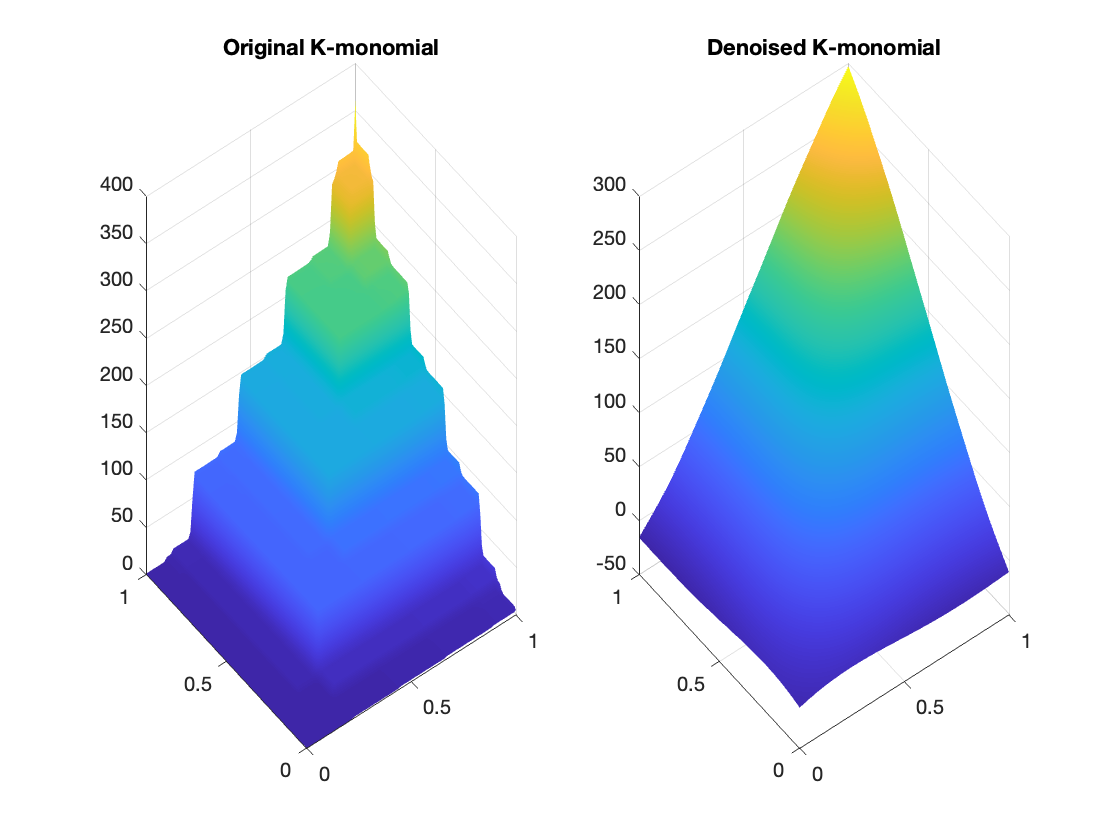} 
    \end{tabular}
    
    \caption{Examples of Kolmogorov-monomials (Top Row: $p_n(x)=x, x^2$. Bottom Row: $p_n(x)=x^4, x^8$). \label{Kpolynomials}}
\end{figure} 

\begin{theorem} [Kolmogorov-Weierstrass Theorem] \label{KW}
For any $f\in C([0,1]^d)$ and any $\epsilon>0$, there exists $K\in span\{Kp_n\}_{n\geq 0}$ with $p_n(t)=t^n$ such that 
\begin{equation}
    \|f-K\|_{\infty}\leq \epsilon.
\end{equation}
\end{theorem}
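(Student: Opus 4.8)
The plan is to reduce the multivariate approximation to the classical univariate Weierstrass theorem through the Kolmogorov representation. The first observation is that $span\{Kp_n\}_{n\geq 0}$ is exactly the set of functions of the form $\sum_{q=0}^{2d} P\left(\sum_{i=1}^d \lambda_i \phi_q(x_i)\right)$, where $P$ ranges over all univariate polynomials. Indeed, writing $t_q := \sum_{i=1}^d \lambda_i \phi_q(x_i)$ and using $p_n(t)=t^n$, a finite linear combination $\sum_n c_n Kp_n$ can be rewritten, by interchanging the two finite sums, as $\sum_{q=0}^{2d}\left(\sum_n c_n t_q^n\right)=\sum_{q=0}^{2d} P(t_q)$ with $P(t)=\sum_n c_n t^n$. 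Thus proving the theorem amounts to producing a single polynomial $P$ for which $\sum_q P(t_q)$ approximates $f$ uniformly.

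Given $f \in C([0,1]^d)$, I would first invoke the Lorentz version of KST (Theorem~\ref{kolmogorov}) to produce the outer function $g \in C([0,d])$ together with the inner functions $\phi_q : [0,1] \to [0,1]$ and scalars $\lambda_p \in (0,1]$ satisfying (\ref{repres}). The key domain remark is that each argument $t_q$ lies in $[0,d]$: since $\phi_q(x_i) \in [0,1]$ and $0 < \lambda_i \le 1$, we get $t_q \in [0, \sum_i \lambda_i] \subseteq [0,d]$. Consequently $g$ is only ever evaluated on the compact interval $[0,d]$.

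Next I would apply the classical Weierstrass approximation theorem to the univariate continuous function $g$ on $[0,d]$: for the given $\epsilon > 0$, choose a polynomial $P$ with $\|g - P\|_{\infty,[0,d]} \le \epsilon/(2d+1)$. Setting $K(x_1,\ldots,x_d) := \sum_{q=0}^{2d} P(t_q)$, which belongs to $span\{Kp_n\}_{n\geq 0}$ by the first paragraph, the error estimate follows from the triangle inequality exactly as in the preceding Hölder computation:
\begin{equation*}
|f(\mathbf{x}) - K(\mathbf{x})| = \left|\sum_{q=0}^{2d}\bigl(g(t_q) - P(t_q)\bigr)\right| \le \sum_{q=0}^{2d} \|g-P\|_{\infty,[0,d]} \le (2d+1)\cdot\frac{\epsilon}{2d+1} = \epsilon,
\end{equation*}
uniformly over $\mathbf{x} \in [0,1]^d$.

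This argument is essentially routine once the two reductions are in place, so I do not expect a genuine obstacle. The only points requiring care are the interchange-of-sums identity that pins down what $span\{Kp_n\}$ actually contains, and the domain check $t_q \in [0,d]$, which is precisely what guarantees that a single polynomial $P$ approximates $g$ uniformly across every one of the $2d+1$ summands. It is worth noting that this argument yields no quantitative rate, since the modulus of continuity of $g$, and hence the required degree of $P$, is uncontrolled; the improved $\mathcal{O}(1/n^2)$ rate promised in the abstract must instead come from restricting to the Hölder subclass and from the spline-based construction rather than from this density statement.
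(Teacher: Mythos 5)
Your proof is correct and follows essentially the same route as the paper's: invoke KST for the outer function $g$, apply the univariate Weierstrass theorem on $[0,d]$ to get a polynomial within $\epsilon/(2d+1)$ of $g$, and sum the errors over the $2d+1$ terms. Your added checks (that the resulting $K$ genuinely lies in $span\{Kp_n\}_{n\geq 0}$ via interchanging sums, and that each argument $t_q$ stays in $[0,d]$) are details the paper leaves implicit, but they do not change the argument.
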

\begin{proof}
    By Kolmogorov superposition theorem, we can write $f(x_1,\cdots,x_d)=\sum_{q=0}^{2d}g\left(\sum_{i=1}^d\lambda_i\phi_q(x_i)\right)$. By Weierstrass approximation theorem, there exists a polynomial $p(t)$ such that $|p(t)-g(t)|\leq\frac{\epsilon}{2d+1}$ for all $t\in [0,d]$. By letting \[K(x_1,\cdots,x_d)=\sum_{q=0}^{2d}p\left(\sum_{i=1}^d\lambda_i\phi_q(x_i)\right)\in span\{Kp_n\}_{n\geq 0},\] we get
\begin{align*}
    |f(x_1,\cdots,x_d)-K(x_1,\cdots,x_d)|&=\left|\sum_{q=0}^{2d}g\left(\sum_{i=1}^d\lambda_i\phi_q(x_i)\right)-\sum_{q=0}^{2d}p\left(\sum_{i=1}^d\lambda_i\phi_q(x_i)\right)\right|  \\
    &\leq \sum_{q=0}^{2d}\left|g\left(\sum_{i=1}^d\lambda_i\phi_q(x_i)\right)-p\left(\sum_{i=1}^d\lambda_i\phi_q(x_i)\right)\right| \\
    &\leq (2d+1)\cdot \frac{\epsilon}{(2d+1)} =\epsilon
\end{align*}
as desired.
\end{proof}

\begin{remark}
    The Kolmogorov-Hölder continuous function class is very large, in fact dense in $C([0,1]^d)$ by Theorem~\ref{KW}. Indeed, in addition to Kolmogorov-polynomials, we can use trigonmetric functions as outer function $g$ to define high dimensional continuous functions called Kolmogorov-trigonometric functions via (\ref{repres}).  Similarly, we can have Kolmogorov-exponential functions, Kolmogorov-logarithmic functions, etc,. 
     In fact, any univariate Hölder continuous function $g$ gives a Kolmogorov-Hölder continuous function $f$ via Kolmogorov representation formula by using Theorem~\ref{kolmogorov}. Because
     these univariate functions $g$ are H\"older continuous, their corresponding $f$ are in the $\text{KH}_\beta$ class for some $\beta>0$.  
\end{remark}






\subsection{Linear KB-splines and LKB-splines}
It is well known that linear spline function can be represented in terms of linear 
combinations of ReLU functions and vice versa, see, e.g. \cite{DD19}, and \cite{DHP21}.
Let $S^0_1(\triangle)$ be the space of all continuous linear splines over the partition
$\triangle=\{0=t_0<t_1<\cdots <t_n=1\}$ with $|\triangle|=\max_i |t_i-t_{i-1}|$. For univariate function $f$, let $\omega(f,h):=\sup_{|x-y|\leq h}|f(x)-f(y)|$ be its modulus of continuity.
From standard approximation theory (c.f. \cite{P81}), we know that

\begin{lemma} \label{Linearspline}
    Suppose $f\in C([0,1])$, let $\triangle$ be a partition over $[0,1]$ with $n$ knots. Then there exists a $L_f\in S^0_1(\triangle)$ such that
    \begin{equation}
    \|f-L_f\|_{\infty}\leq
\begin{cases}
  \omega(f, \frac{1}{n}), \quad \text{if} \quad f\in C([0,1]),\\      
  \frac{1}{2n}\|f'\|_{\infty}, \quad \text{if} \quad f\in C^1([0,1]),\\
  \frac{1}{8n^2}\|f''\|_{\infty}, \quad \text{if} \quad f\in C^2([0,1]).
\end{cases}
\end{equation}
\end{lemma}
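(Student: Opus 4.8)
The plan is to take $L_f$ to be the continuous piecewise linear interpolant of $f$ at the knots of $\triangle$, i.e.\ the unique element of $S^0_1(\triangle)$ satisfying $L_f(t_i)=f(t_i)$ for every $i$, and then to bound $|f(x)-L_f(x)|$ separately on each subinterval $[t_{i-1},t_i]$. All three estimates follow by a purely local computation on one subinterval, after which one takes the maximum over $i$ and uses $|\triangle|\leq 1/n$ (the uniform mesh). So the entire argument reduces to estimating the interpolation error on a single $[t_{i-1},t_i]$ under the three successive smoothness hypotheses.

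For the first case I would fix $x\in[t_{i-1},t_i]$, write $L_f(x)=(1-\theta)f(t_{i-1})+\theta f(t_i)$ with $\theta=(x-t_{i-1})/(t_i-t_{i-1})\in[0,1]$, and observe that $f(x)-L_f(x)=(1-\theta)\bigl(f(x)-f(t_{i-1})\bigr)+\theta\bigl(f(x)-f(t_i)\bigr)$ is a convex combination of two differences of $f$-values at points within distance $|\triangle|$ of $x$. Bounding each difference by $\omega(f,|\triangle|)$ and using $\omega(f,|\triangle|)\leq\omega(f,1/n)$ gives the first bound at once. For the $C^1$ case I would retain the same convex-combination identity but estimate each difference by the mean value theorem, replacing $|f(x)-f(t_{i-1})|$ and $|f(x)-f(t_i)|$ by $\|f'\|_\infty$ times $(x-t_{i-1})$ and $(t_i-x)$ respectively; the weights $1-\theta$ and $\theta$ then conspire so that both terms equal $\|f'\|_\infty (t_i-x)(x-t_{i-1})/h_i$ with $h_i=t_i-t_{i-1}$, whose sum is $2\|f'\|_\infty (t_i-x)(x-t_{i-1})/h_i$. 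For the $C^2$ case I would instead invoke the classical Lagrange interpolation remainder $f(x)-L_f(x)=\tfrac{1}{2}f''(\xi_x)(x-t_{i-1})(x-t_i)$ for some $\xi_x\in(t_{i-1},t_i)$ and pass to absolute values.

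The main point, such as it is, is not conceptual but lies in extracting the sharp constants: in both the $C^1$ and $C^2$ cases one must maximize the quadratic $(t_i-x)(x-t_{i-1})$ over the subinterval, whose maximum $h_i^2/4$ is attained at the midpoint. This yields the local bounds $\|f'\|_\infty h_i/2$ and $\|f''\|_\infty h_i^2/8$, and taking the maximum over $i$ with $h_i\leq|\triangle|\leq 1/n$ produces exactly the stated constants $1/(2n)$ and $1/(8n^2)$. All three estimates are standard (cf.\ \cite{P81}); I would present the argument only for completeness, treating the three regularity levels in the single unified framework above.
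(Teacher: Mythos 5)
Your proof is correct: the piecewise linear interpolant at the knots, the convex-combination identity for the $C$ and $C^1$ cases, and the Lagrange remainder with the maximization of $(t_i-x)(x-t_{i-1})$ at the midpoint yield exactly the stated constants. The paper offers no proof of this lemma, simply citing standard approximation theory (cf.\ \cite{P81}), and your argument is precisely the standard one being invoked.
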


\begin{remark} \label{optimalratermk}
Note that even if we can further increase the smoothness of function $f$, the approximation rate is not getting better. In order to achieve a better approximation rate for those $f$ with higher order smoothness, one has to use a higher degree splines.
Therefore, for linear spline approximation, $\mathcal{O}(1/n^2)$ is the optimal approximation rate.
\end{remark}

For $f\in C([0,1]^d)$, we would like to apply Lemma~\ref{Linearspline} for approximating the outer function $g$, and hence approximating $f$ via the representation formula (\ref{repres}). For this purpose, let us define the linear KB-splines of $f$ as 
\begin{equation}
KB(f)_n(x_1,\cdots,x_d):=\sum_{q=0}^{2d}L_g\left(\sum_{i=1}^d\lambda_i\phi_q(x_i)\right),
\end{equation}
where $L_g$ is chosen to be the linear spline interpolation of the outer function $g\in C([0,d])$ with uniform partition of $[0,d]$ with $nd$ knots, i.e., $|\triangle|=\frac{1}{n}$. Then by Theorem \ref{kolmogorov} and Lemma \ref{Linearspline}, we have
\begin{theorem} \label{OptimalRate}
Suppose $f\in C([0,1]^d)$. Then
\begin{equation}
    \|f-KB(f)_n\|_{\infty}\leq
\begin{cases}
  (2d+1)\omega(g, \frac{1}{n}), \quad \text{if} \quad g\in C([0,d]),\\      
  \frac{2d+1}{2n}\|g'\|_{\infty}, \quad \text{if} \quad g\in C^1([0,d]),\\
  \frac{2d+1}{8n^2}\|g''\|_{\infty}, \quad \text{if} \quad g\in C^2([0,d]).
\end{cases}
\end{equation}
\end{theorem}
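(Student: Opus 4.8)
The plan is to reduce the multivariate approximation error directly to the univariate spline approximation error for the outer function $g$, and then sum over the $2d+1$ summands. First I would invoke Theorem~\ref{kolmogorov} to write $f(x_1,\dots,x_d)=\sum_{q=0}^{2d} g\big(\sum_{i=1}^d \lambda_i\phi_q(x_i)\big)$, and subtract the definition of $KB(f)_n$ so that the two sums pair up summand by summand over the same index $q$. Applying the triangle inequality then gives
\[
|f(\mathbf{x})-KB(f)_n(\mathbf{x})| \le \sum_{q=0}^{2d}\Big| g(u_q)-L_g(u_q)\Big|, \qquad u_q:=\sum_{i=1}^d \lambda_i\phi_q(x_i).
\]

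The key observation in the next step is that each argument $u_q$ lies in $[0,d]$: since every $\phi_q$ maps $[0,1]$ into $[0,1]$ and $0<\lambda_i\le 1$, we have $0\le u_q=\sum_{i=1}^d\lambda_i\phi_q(x_i)\le \sum_{i=1}^d\lambda_i\le d$. Hence each term $|g(u_q)-L_g(u_q)|$ is bounded by $\|g-L_g\|_{\infty,[0,d]}$, uniformly in $\mathbf{x}$ and in $q$, so that a single univariate estimate controls every summand at once.

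For the analytic step I would apply Lemma~\ref{Linearspline} to the univariate function $g$ on $[0,d]$. The crucial point is that the bounds in that lemma depend only on the mesh size $|\triangle|$ rather than on the number of knots per se; since $L_g$ uses a uniform partition of $[0,d]$ with $nd$ knots, the mesh size is $|\triangle|=1/n$. Thus $\|g-L_g\|_\infty$ is bounded by $\omega(g,1/n)$, $\frac{1}{2n}\|g'\|_\infty$, or $\frac{1}{8n^2}\|g''\|_\infty$ according to whether $g$ lies in $C([0,d])$, $C^1([0,d])$, or $C^2([0,d])$. Taking the supremum over $\mathbf{x}\in[0,1]^d$ and summing the $2d+1$ identical bounds produces the factor $(2d+1)$ in front, which yields the three stated estimates.

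The main obstacle, though minor, is bookkeeping rather than conceptual: one must verify that the single spline $L_g$ defined on all of $[0,d]$ simultaneously controls every summand (this is exactly where the containment $u_q\in[0,d]$ is used), and that the mesh-size normalization in Lemma~\ref{Linearspline} transfers correctly from $[0,1]$ to $[0,d]$ so that $nd$ knots give precisely mesh $1/n$. Once these two points are checked, the estimate follows from a direct summation with no further analytic input.
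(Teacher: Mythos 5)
Your proposal is correct and follows essentially the same route as the paper's proof: pair the summands over $q$, apply the triangle inequality, and invoke the univariate linear spline estimate for $g$ on $[0,d]$ with mesh size $1/n$, yielding the factor $2d+1$. Your explicit verification that each argument $u_q$ lies in $[0,d]$ and that $nd$ knots on $[0,d]$ give mesh $1/n$ is a small bookkeeping point the paper leaves implicit, but the argument is the same.
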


\begin{proof}
Let us show only the proof for the case $g\in C^2([0,d])$, the proofs for the other two cases are similar. For any $\mathbf{x}=(x_1,\cdots,x_d)$, we have
\begin{align*}
    \left|f(\mathbf{x})-KB(f)_n(\mathbf{x})\right|&=\left|\sum_{q=0}^{2d}g\left(\sum_{i=1}^d\lambda_i\phi_q(x_i)\right)-\sum_{q=0}^{2d}L_g\left(\sum_{i=1}^d\lambda_i\phi_q(x_i)\right)\right|  \\
    &\leq \sum_{q=0}^{2d}\left|g\left(\sum_{i=1}^d\lambda_i\phi_q(x_i)\right)-L_g\left(\sum_{i=1}^d\lambda_i\phi_q(x_i)\right)\right| 
    \leq \frac{2d+1}{8n^2}\|g''\|_{\infty}
\end{align*}
as desired.
\end{proof}

Theorem \ref{OptimalRate} immediately shows linear KB-splines are dense in $C([0,1]^d)$. More importantly, {the approximation rate of linear KB-splines is independent of dimension $d$} while the approximation constant is linearly dependent on $d$.  Thus, we conclude that 
the approximation of high dimensional continuous function $f$ does not suffer from the curse of dimensionality  for a subclass of $C([0,1]^d)$, i.e., those $f$ whose outer function $g\in C^1([0,d])$ or $g\in C^2([0,d])$, such a subclass is dense as $C^1([0,d])$ and $C^2([0,d])$ are dense in $C([0,d])$. In fact, there are many choices of such $g$. For example, $g$ can be polynomial functions, trigonometric functions, exponential functions, etc,. Moreover, as discussed in Remark~\ref{optimalratermk}, the optimal rate of approximation for $f$ by using KB-splines is $O(1/n^2)$, which is achieved when the outer function $g_f\in C^2([0,d])$. It is also not hard to see that the number of parameters needed in such approximation equals to the number of knots, which is $O(nd)$.

Let us recall the linear KB-spline basis functions defined in \cite{LaiShenKST21}.
Let $\triangle_n=\{ 0= t_1<t_2<\cdots <t_{dn}<d\}$ 
be a uniform partition of interval $[0, d]$, and $b_{n,i}(t), i=1, \cdots, dn$ 
be the standard univariate linear B-splines, we define the linear KB-spline (basis) functions as
\begin{equation}
\label{KB}
\text{KB}_{n,j}(x_1,\cdots, x_d) := \sum_{q=0}^{2d} b_{n,j}\left(\sum_{i=1}^d\lambda_i\phi_q(x_i)\right), 
j=1, \cdots, dn.
\end{equation}

We showed in \cite{LaiShenKST21} that these $\text{KB}_{n,j}$ have several useful properties, e.g. 
the partition of unity, linear independence, and denseness in $C([0,1]^d)$.
Thus, we can treat $\text{KB}_{n,j}$ as basis functions for approximating $f\in C([0,1]^d)$. However, the basis $\text{KB}_{n,j}$ are not differentiable and has many jumps, hence can not be directly used for approximating $f$. For $d=2$ and $d=3$, we apply a spline denoising technique as introduced in \cite{LaiShenKST21} to denoise the KB-spline basis and get the corresponding smooth LKB-splines. We will briefly introduce the denoising procedure for constructing LKB-splines in the next section. For dimension $d\geq 4$, we need to apply tensor product of such denoising technique, and we will explain it in the next section as well.

\section{Tensor product approximation and denoising}
\label{TD}
Let us first recall the approximation based on tensor product of Bernstein polynomial, which is well-known in the literature. We review them in order
to explain the computation of tensor product splines for denoising in the later subsection. 
\subsection{Tensor product approximation of Bernstein polynomial}

Suppose $f\in C([0,1])$, we define the Bernstein operator of degree $n$ on $f$ as
\begin{equation}
    B_n f(x) := \sum_{i=0}^n f\left(\frac{i}{n}\right)B_{n,i}(x)
\end{equation}
where $B_{n,i}=\binom{n}{i}x^i(1-x)^{n-i}$ is the Bernstein basis polynomial. From standard approximation theory (c.f. \cite{P81}), we know

\begin{lemma} \label{1dBd}
    Suppose $f\in C^2([0,1])$. Then
    \begin{equation}
        \|f-B_nf\|_{\infty}\leq \frac{1}{8n}\|f''\|_{\infty}.
    \end{equation}
 \end{lemma}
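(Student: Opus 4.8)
The plan is to combine a second-order Taylor expansion of $f$ with the first three moment identities of the Bernstein basis. First I would record the three facts I need: the partition of unity $\sum_{i=0}^n B_{n,i}(x)=1$, the reproduction of linears $\sum_{i=0}^n \frac{i}{n} B_{n,i}(x)=x$ (equivalently $B_n$ fixes affine functions), and the second-moment identity
\begin{equation}
\sum_{i=0}^n \left(\frac{i}{n}-x\right)^2 B_{n,i}(x)=\frac{x(1-x)}{n}. \nonumber
\end{equation}
The first two are immediate from the binomial theorem; the third is the one genuine computation, and I would obtain it either by differentiating the identity $\sum_i \binom{n}{i} y^i (1-x)^{n-i}$ twice in $y$ and setting $y=x$, or probabilistically by noting that $\frac{i}{n}$ under the weights $B_{n,i}(x)$ is the sample mean of $n$ Bernoulli$(x)$ trials, whose variance is $\frac{x(1-x)}{n}$.

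Next I would fix $x\in[0,1]$ and apply Taylor's theorem with Lagrange remainder to each node $i/n$: there exists $\xi_i$ between $x$ and $i/n$ so that
\begin{equation}
f\!\left(\frac{i}{n}\right)=f(x)+f'(x)\left(\frac{i}{n}-x\right)+\frac{1}{2}f''(\xi_i)\left(\frac{i}{n}-x\right)^2. \nonumber
\end{equation}
Multiplying by $B_{n,i}(x)$ and summing over $i$, the zeroth-order term collapses to $f(x)$ by the partition of unity, and the first-order term vanishes because $\sum_i \left(\frac{i}{n}-x\right)B_{n,i}(x)=x-x=0$. This leaves
\begin{equation}
B_n f(x)-f(x)=\frac{1}{2}\sum_{i=0}^n f''(\xi_i)\left(\frac{i}{n}-x\right)^2 B_{n,i}(x). \nonumber
\end{equation}

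To finish I would bound $|f''(\xi_i)|\le \|f''\|_\infty$ pointwise, pull the bound out of the sum, and invoke the second-moment identity together with $\max_{x\in[0,1]} x(1-x)=\tfrac14$, attained at $x=\tfrac12$. This yields $|B_nf(x)-f(x)|\le \frac{1}{2}\|f''\|_\infty\cdot\frac{x(1-x)}{n}\le \frac{1}{8n}\|f''\|_\infty$ uniformly in $x$, which is the claim. The only step requiring real care is the second-moment identity; everything else is bookkeeping. A minor technical point worth stating explicitly is that since $\xi_i$ depends on both $i$ and $x$ but the bound $\|f''\|_\infty$ is uniform, the estimate survives the supremum over $x$ without any measurability or continuity concern about the map $x\mapsto \xi_i$.
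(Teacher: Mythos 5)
Your proof is correct and complete: the paper does not actually prove Lemma~\ref{1dBd} but cites it as standard (c.f.\ \cite{P81}), and your argument --- Taylor expansion with Lagrange remainder, annihilation of the first-order term by linear reproduction, and the second-moment identity $\sum_i (i/n-x)^2 B_{n,i}(x)=x(1-x)/n$ bounded by $1/(4n)$ --- is precisely the classical derivation that reference supplies. The only nitpick is cosmetic: the generating identity you differentiate should be written as $(y+(1-x))^n=\sum_i\binom{n}{i}y^i(1-x)^{n-i}$ (you omitted the left-hand side), though your probabilistic justification of the same identity is already sufficient.
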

\noindent
In general, for $f\in C([0,1]^d)$, we can define  


    
 \begin{equation}
    B_{n_1,\cdots,n_d} f(x_1,\cdots, x_d):=\sum_{i_1=0}^{n_1}\cdots\sum_{i_d=0}^{n_d} f\left(\frac{i_1}{n_1},\cdots,\frac{i_d}{n_d}\right)B_{n_1,i_1}(x_1)\cdots B_{n_d,i_d}(x_d).
\end{equation}
By applying Lemma \ref{1dBd} and 
and a chain of triangle inequalities argument, it is not hard to establish the following result. We leave its proof to the interested readers.
\begin{lemma}
    Suppose $f\in C^2([0,1]^d)$ for integer $d\geq 1$. Then
    \begin{equation}
        \|f-B_{n,\cdots,n}f\|_{\infty}\leq \frac{d}{8n}|f''|_{2,\infty},
    \end{equation}
where
$|f|_{2,\infty}=\max_{i_1+\cdots+i_d=2}\|D_{x_1}^{i_1}\cdots D_{x_d}^{i_d} f\|_{\infty}$.
\end{lemma}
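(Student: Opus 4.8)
The plan is to exploit the fact that the tensor product Bernstein operator factors as a composition of the one-dimensional operators acting one coordinate at a time, and then to telescope. First I would introduce, for each $k\in\{1,\dots,d\}$, the partial Bernstein operator $T_k$ that applies $B_n$ in the variable $x_k$ while freezing all other variables. By the product structure of the tensor basis $B_{n,i_1}(x_1)\cdots B_{n,i_d}(x_d)$ one sees immediately that $B_{n,\dots,n}=T_1T_2\cdots T_d$, and since distinct $T_k$ act on distinct variables they commute with one another.

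The second ingredient is that each $T_k$ is a sup-norm contraction on $C([0,1]^d)$. Because the Bernstein basis polynomials are nonnegative and form a partition of unity, $T_k$ is a positive linear operator that reproduces constants, so $\|T_k h\|_\infty\le\|h\|_\infty$ for every $h\in C([0,1]^d)$. Consequently any composition $T_1\cdots T_{k-1}$ has operator norm at most $1$, a fact I would state and justify carefully before telescoping since it is what keeps the argument clean.

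Next I would write the telescoping identity
\[
 f-B_{n,\dots,n}f=\sum_{k=1}^d T_1\cdots T_{k-1}(I-T_k)f,
\]
which collapses to $(I-T_1\cdots T_d)f$ after the internal cancellations (with the empty product $T_1\cdots T_0$ read as the identity $I$). Applying the contraction bound to the prefactor and then the one-dimensional estimate of Lemma~\ref{1dBd} in the $k$-th variable — freezing $x_j$ for $j\ne k$, and noting that the univariate second derivative of $x_k\mapsto f(\dots,x_k,\dots)$ is exactly $D_{x_k}^2 f$, one of the pure second-order derivatives controlled by $|f|_{2,\infty}$ — gives
\[
 \|T_1\cdots T_{k-1}(I-T_k)f\|_\infty\le\|(I-T_k)f\|_\infty\le\frac{1}{8n}\|D_{x_k}^2 f\|_\infty\le\frac{1}{8n}|f|_{2,\infty}.
\]
Summing the $d$ terms then yields the claimed bound $\frac{d}{8n}|f|_{2,\infty}$.

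The main obstacle will be the bookkeeping in the telescoping step together with making the uniformity precise: one must check that Lemma~\ref{1dBd} applies with a constant independent of the frozen variables (it is, since $1/(8n)$ is universal), and that in this arrangement $(I-T_k)$ acts on $f$ itself rather than on a partially smoothed iterate, so that only the $C^2$ smoothness of $f$ — and not any regularity of the intermediate functions — is required. Once the contraction property and the factorization are in hand, the estimate follows by a routine chain of inequalities as indicated above.
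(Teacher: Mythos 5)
Your proof is correct and follows essentially the route the paper has in mind: the paper does not write out a proof but states that the lemma follows from Lemma~\ref{1dBd} ``and a chain of triangle inequalities argument,'' which is exactly your telescoping decomposition $f-T_1\cdots T_d f=\sum_{k=1}^d T_1\cdots T_{k-1}(I-T_k)f$ combined with the sup-norm contractivity of each coordinate-wise Bernstein operator. Your explicit care in arranging $(I-T_k)$ to act on $f$ itself (so that only the $C^2$ regularity of $f$ is invoked) is a worthwhile detail that the paper's one-line sketch glosses over.
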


\subsection{Spline denoising} 
The linear KB-splines obtained via (\ref{KB}) are {nonsmooth and have many jumps}, therefore are not directly useful for approximation. We would like to smooth/denoise them so that they will be useful. For self-containedness, let us briefly introduce the ideas of spline denoising and tensor product spline denoising. We leave more details of the denoising procedure to \cite{LaiShenKST21}.

\begin{figure}[t]
    \centering
    \begin{tabular}{cc} 
        \includegraphics[width=0.49\textwidth]{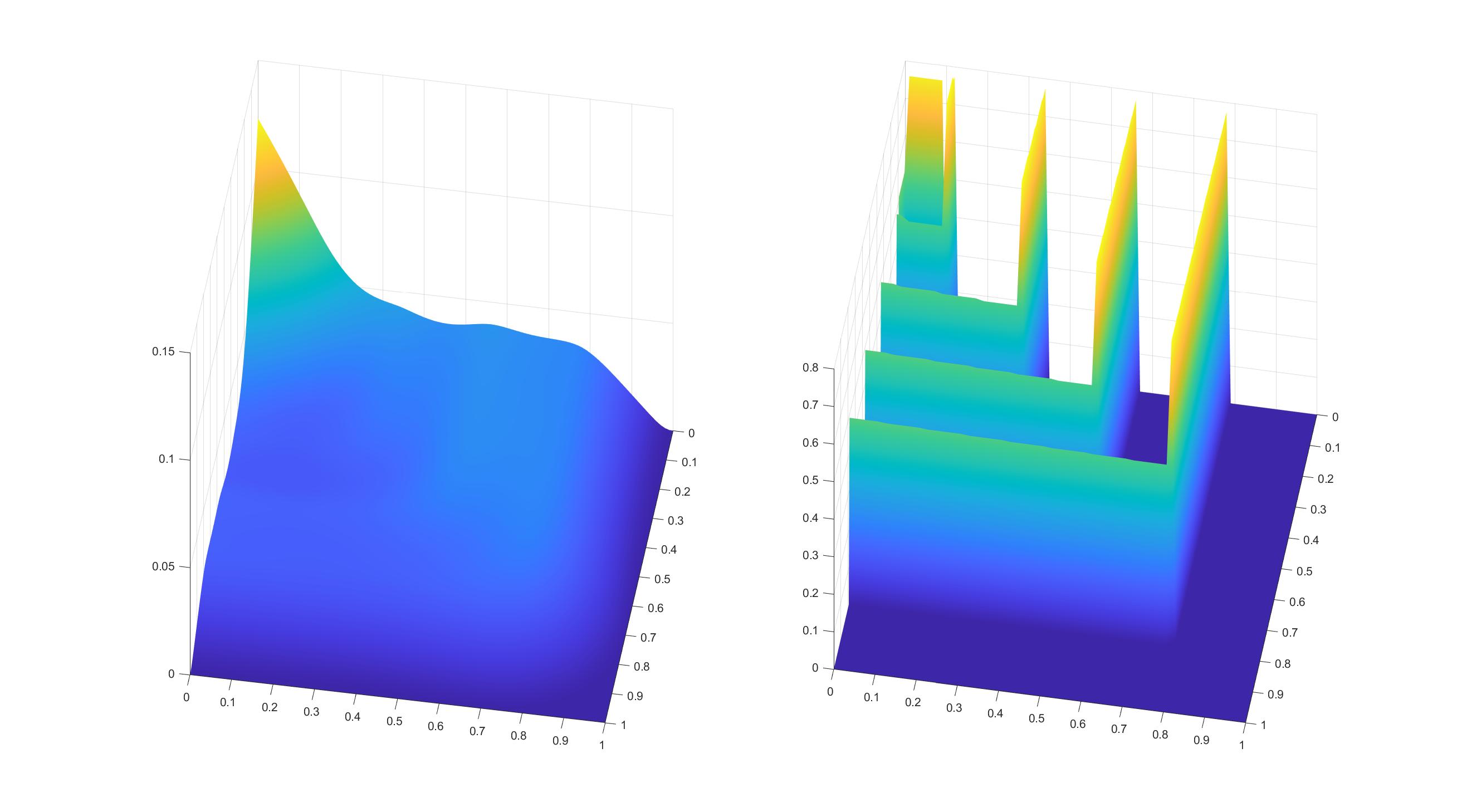}  &  \includegraphics[width=0.49\textwidth]{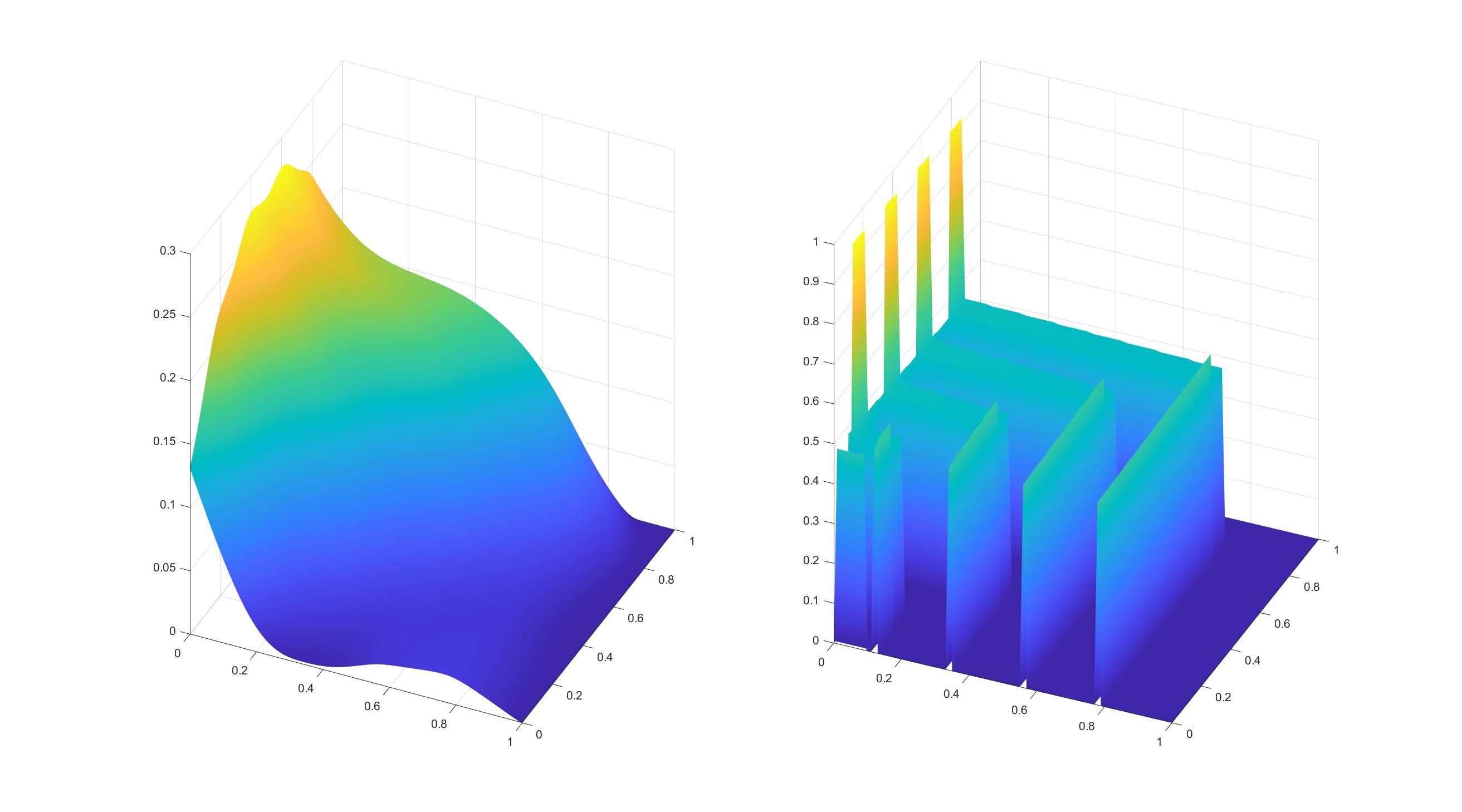} \\
        \includegraphics[width=0.49\textwidth]{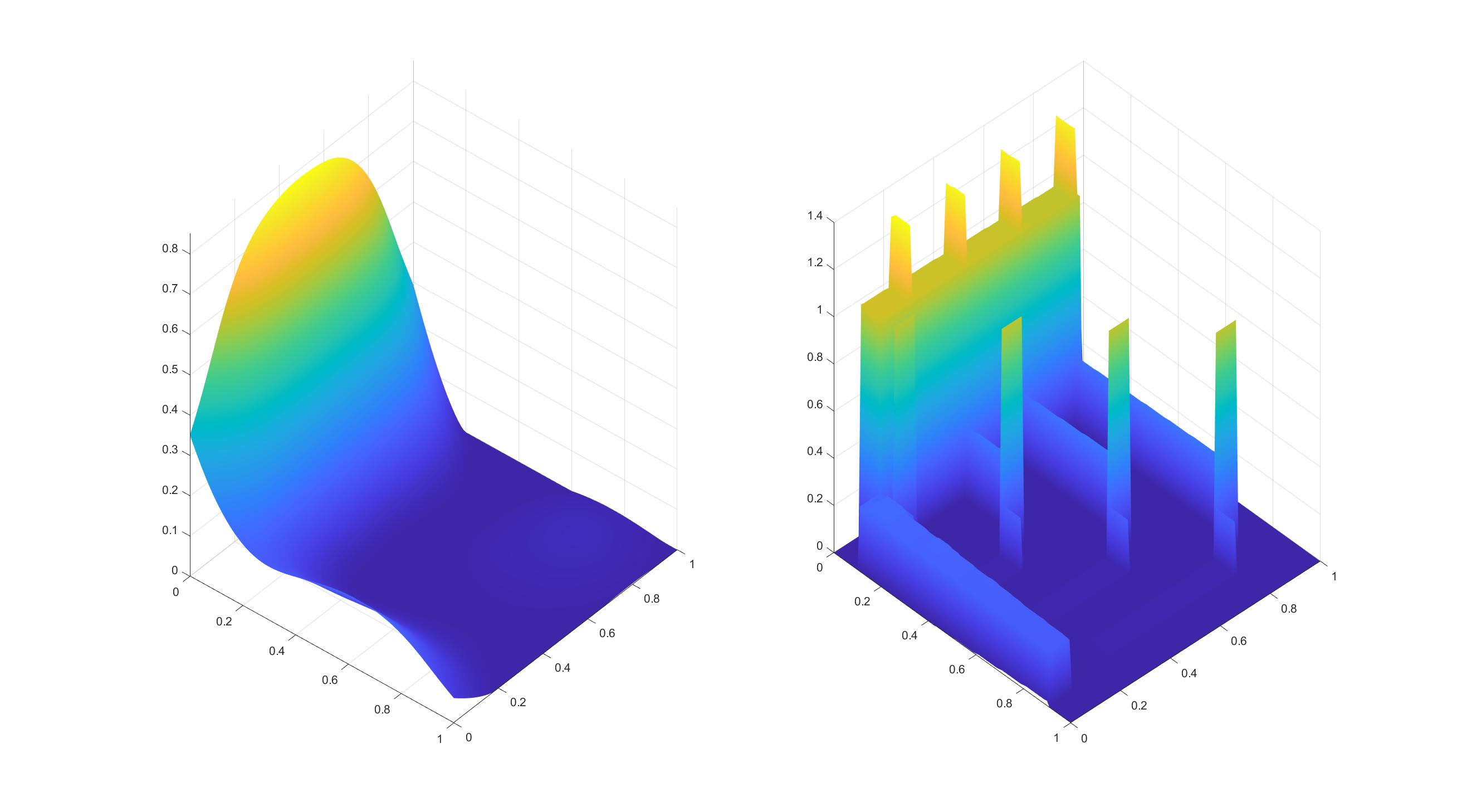} &
        \includegraphics[width=0.49\textwidth]{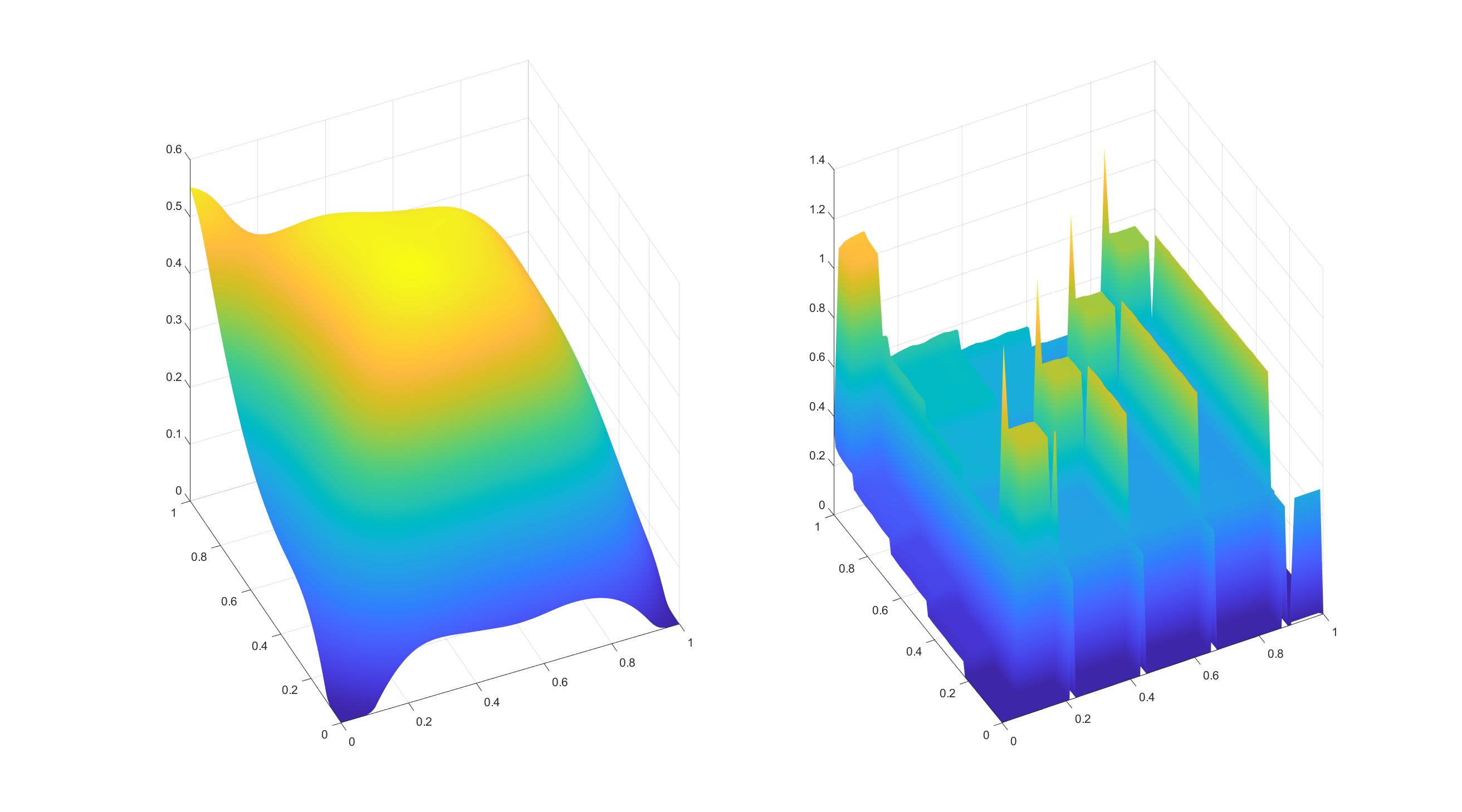} 
    \end{tabular}
    \caption{Some examples of linear LKB-splines (the first and third columns) which are the smoothed version of the corresponding linear KB-splines (the second and fourth columns). \label{f1}}
\end{figure}

For convenience, we base our discussion on the bivariate splines.
Let us first recall bivariate spline space.  For a triangulation $\triangle$ of $[0, 1]^2$, 
for any degree $d\ge 1$ and smoothness $r\ge 1$ with $r<d$, let
\begin{equation}
\label{splinespace}
S^r_d(\triangle)= \{ s\in C^r([0, 1]^2):  s|_T\in \mathbb{P}_d, T\in \triangle\}
\end{equation}
be the spline space of degree $d$ and smoothness $r$ with $d>r$. We refer to \cite{LS07} for 
theoretical details and \cite{ALW06,LL22} and for computational details of multivariate splines.
For a bivariate spline function $s(x,y)\in S^r_d(\triangle)$, we can write it as
\begin{equation} \label{splinecoeff}
s(x,y)=\sum_{i=1}^{m} c_{i} b_{i}(x,y)\in S^r_d(\triangle), 
\end{equation}
where $c_{i}$'s are the spline coefficients, $b_i(\cdot,\cdot)$ are bivariate basis splines with degree $d$ and smoothness $r$, and $m$ is the dimension of the bivariate spline space. Note that the 
computation of $b_i(x,y)$ takes some efforts. We adopt the approach in \cite{ALW06} and {we fix $r=1$ and $d=5$ in our tensor product implementation.}

For a given data set $\{(x_i,y_i, z_i)\}_{i=1}^N$ with data locations
$(x_i,y_i)\in [0, 1]^2$ and noisy data values $z_i=f(x_i,y_i)+\epsilon_i$, $i=1,\cdots,N$.
The penalized least squares method (cf. \cite{L08} and \cite{LS09}) of bivariate spline denoising is to find 
\begin{equation}
\label{PLS2D}
\min_{s\in S^r_d(\triangle)} \sum_{i=1, \cdots, N} \left|s(x_i,y_i)- z_i\right|^2  
+ \lambda {\cal E}_2(s) 
\end{equation}
for some fixed constant $\lambda\approx 1$, where ${\cal E}_2(s)$ is the thin-plate energy functional 
defined as
\begin{equation}
\label{E2}
{\cal E}_2(s) :=\int_{[0,1]^2} \left|\frac{\partial^2}{\partial x^2} s\right|^2 + 2
\left|\frac{\partial^2}{\partial x \partial y} s\right|^2 + \left|\frac{\partial^2}{\partial y^2} s\right|^2. 
\end{equation}  
It is well known that this approach can be used for smoothing noisy data.  In our computation, the triangulation $\triangle$ is 
the one obtained from uniformly refined the initial triangulation $\Delta_0$ three times, where
$\Delta_0$ is  obtained by dividing $[0, 1]^2$ into two triangles using its diagonal line.  

Let us write 
$S^1_5(\triangle)=\hbox{span}\{b_1, \cdots, b_m\}$, and $0= t_1<t_2<\cdots <t_{dn}<d$ 
be a uniform partition of interval $[0, d]$. For each $j=1, \cdots, dn$, we define LKB-splines as
\begin{equation}
    LKB_{n,j}:= \sum_{i=1}^m c_{i,j}b_i
\end{equation}
where the coefficients $c_{i,j}$'s are the solutions to (\ref{PLS2D}) via the representation (\ref{splinecoeff}) with each $z_i$, $i=1,\cdots,N$, in (\ref{PLS2D}) substituted by $KB_{n,j}(x_i,y_i)$. 

The denoised LKB-splines are much smoother and nicer. Some examples of generated linear LKB-splines are shown in Figure~\ref{f1}.

\subsection{Tensor product of spline denoising} \label{TPDenoising}

Now let us explain the idea of tensor product spline based denoising method for smoothing noisy KB-splines. For convenience, 
let us consider the case for $d=4$, similar arguments can be applied to a general $d>4$ by using the tensor product of bivariate and trivariate splines. See \cite{deB79} for the general case for tensor product splines for data 
interpolation. 

For the rest of the discussion, we focus on the tensor product bivariate spline space ${\cal S}:=    S^1_5(\triangle) \times S^1_5(\triangle)$. For a given data set $\{(x_i,y_i, u_j, v_j, z_{i,j}), i,j=1, \cdots, N\}$ with data locations  
$(x_i,y_i,u_j,v_j)\in [0, 1]^2\times [0,1]^2$ and noisy data values $z_{i,j}$,  we can write a spline function
\begin{equation}
s(x,y,u,v)=\sum_{i=1}^{m_1}\sum_{j=1}^{m_2} c_{ij} b_{i}(x,y)b_j(u,v)\in {\cal S}, 
\end{equation}
where $c_{ij}$'s are the spline coefficients, $b_i(\cdot,\cdot)$ are bivariate splines with degree $d$ and smoothness $r$, and $m_1, m_2$ are the dimensions of the bivariate spline spaces. The penalized least squares method of tensor product bivariate spline denoising is to find the spline coefficients $c_{ij}$ which solves
\begin{equation}
\label{PLS4D}
\min_{s\in {\cal S}} \sum_{i,j=1, \cdots, N} |s(x_i,y_i, u_j, v_j)- z_{ij}|^2  
+ \lambda {\cal E}_{2\times 2}(s)
\end{equation}
with $\lambda\approx 1$, and ${\cal E}_{2\times 2}(s)$ is
defined as
\begin{equation}
\begin{aligned}
\label{E4}
{\cal E}_{2\times 2}(s) &:=\int_{[0,1]^2}\left(\int_{[0,1]^2} \left|\frac{\partial^2}{\partial x^2} s\right|^2 + 2
\left|\frac{\partial^2}{\partial x \partial y} s\right|^2 + \left|\frac{\partial^2}{\partial y^2} s\right|^2 dxdy\right)dudv\\
&+\int_{[0,1]^2}\left(\int_{[0,1]^2} \left|\frac{\partial^2}{\partial u^2} s\right|^2 + 2
\left|\frac{\partial^2}{\partial u \partial v} s\right|^2 + \left|\frac{\partial^2}{\partial v^2} s\right|^2 dudv\right)dxdy. 
\end{aligned}  
\end{equation}

Let us explain next the computational procedure for finding the spline coefficients $c_{ij}$ based on a two-stage bivariate spline denoising scheme. 
Recall that tensor product splines for data interpolation were explained in \cite{deB79}. We extend its ideas to data denoising.  
For a given data set $\{(x_i,y_i, u_j,v_j,z_{i,j}), i, j=1, \cdots, N\}$ with data locations 
$(x_i,y_i)\in [0, 1]^2$ and $(u_j,v_j)\in [0, 1]^2$ and  noisy data values $z_{i,j}$, $i,j=1, \cdots, N$, we can write 
\begin{equation} \label{4dTPD}
    s(x,y,u,v)=\sum_{i=1}^{m_1}\sum_{j=1}^{m_2} c_{ij}b_{j}(u,v)b_{i}(x,y).
\end{equation}

Suppose our data is equally-spaced over $[0,1]^2\times[0,1]^2$, i.e., $N=m_1=m_2$. Let us denote $d_i(u,v)=\sum_{j=1}^{m_2} c_{ij}b_{j}(u,v)$, then we can write equation (\ref{4dTPD}) as
$s(x,y,u,v)=\sum_{i=1}^{m_1}d_i(u,v)b_i(x,y)$.
For fixed $(u_k,v_k)$, $k=1,\cdots,N$, write $d_i(u_k,v_k)=d_{ik}$ for all $i=1, \cdots, m_1$. For each fixed $k$, we can find the intermediate spline coefficients $d_{ik}$ via (\ref{PLS2D}) by letting 
\begin{equation} \label{4dTPD1}
    s(x_\ell,y_\ell)_k := s(x_\ell,y_\ell,u_k,v_k)=\sum_{i=1}^{m_1}d_{ik} b_i(x_\ell,y_\ell)
\end{equation}
for $\ell=1,\cdots,N$.
Once we have $d_{ik}$, then for each fixed $i$, we can find the spline coefficients $c_{ij}$ via (\ref{PLS2D}) by letting
\begin{equation} \label{4dTPD2}
    s(u_k,v_k):=d_{ik} = \sum_{j=1}^{m_2}c_{ij}b_j(u_k,v_k)
\end{equation}
for $k=1,\cdots,N$.

The advantage of tensor product spline denoising is its computational efficiency. If we directly solve the penalized least squares problem (\ref{PLS4D}) for the coefficients $c_{ij}$ without using this tensor product 
approach, then the matrix size associated in (\ref{4dTPD}) is $N^2\times m_1m_2$. Hence, solving it directly requires the computation complexity $\mathcal{O} (m_1^2 m_2^2 N^2)$. However, if we solve it by using tensor product via (\ref{4dTPD1}) and (\ref{4dTPD2}), then we only need to solve $N$ systems whose matrix size is of $N\times m_1$ and another $N$ systems whose matrix size is of $N\times m_2$. Therefore the computational complexity for solving them directly requires $\mathcal{O}(Nm_1^2 N+N m_2^2 N)=\mathcal{O}((m_1^2+m_2^2)N^2)$. If we use large degree $d$ and high smoothness $r\ge 1$ for denoising, then $m_1^2+m_2^2\ll m_1^2m_2^2$. Therefore, the computational cost for the two-stage tensor product denoising technique is much less than the direct denosing technique. This is why we adopt the tensor product spline denosing method. For the general case $d>4$, we can easily extend this idea to have a multi-stage denoising scheme. We leave out the details here.

\begin{remark}
    For any dimension $d>4$, it is not hard to see that $d$ can be written as a sum of $2$'s and $3$'s. Therefore, we can apply tensor products of bivariate and trivariate splines for denoising functions in any dimension. One may also consider the tensor products of univarite spline for denoising, however, the computation is much more demanding because the number of products needed in the univariate spline case is much larger than using bivariate and trivariate splines. 
\end{remark}

For each high dimensional linear KB-spline obtained via (\ref{KB}), we can apply such a computational scheme to solve (\ref{PLS4D}) and obtain the corresponding high dimensional linear LKB-spline, which is useful for approximation. We will use these linear LKB-splines as basis for high dimensional function approximation.

\section{Numerical experiments for LKB-splines based approximation } \label{experiments}

Let us present the numerical results for function approximation in $\mathbb{R}^d$ with $d=4$ and $d=6$ based on the linear LKB-spline basis obtained via the computational procedure described in the previous section.

We shall use discrete least squares (DLS) method to approximate any function $f\in C([0, 1]^d)$. Let $\{\mathbf{x}_i\}_{i=1}^N$ be a set of discrete points over $[0, 1]^d$. For any $f\in C([0,1]^d)$, we use the function values at these data locations to find an 
approximation $F_n= \sum_{j=1}^{dn} c_j^* LKB_{n,j}$ by DLS method 
where $c_j^*$, $j=1,\cdots,dn$, is the solution of the following minimization
\begin{equation}
\label{DLS}
\min_{ c_j} \| f- \sum_{j=1}^{dn} c_j LKB_{n,j}\|_{\cal P}.
\end{equation}
The notation $\|f\|_{\cal P}$ denotes the RMSE semi-norm based on the function values $f$ over 
these $N$ sampled data points in $[0, 1]^d$. We shall report the accuracy $\|f- F_n(f)\|_{\cal PP}$, 
where $\|f\|_{\cal PP}$ is the RMSE semi-norm based on more than $N$ function values. One important result established in \cite{LaiShenKST21}  for LKB-splines based approximation is the following.
\begin{theorem} [cf. \cite{LaiShenKST21}]
\label{newmain}
Suppose that $f\in C^2([0,1]^2)$. Let $F_n$ be the discrete least squares approximation of $f$ defined in (\ref{DLS}). Suppose that the points $\{x_i,y_i, f(x_i,y_i)+\epsilon_i)\}_{i=1}^N$ for 
(\ref{DLS}) are the same as the points for denoising KB-splines to have the LKB-splines. 
Then 
\begin{equation}
\label{newestimate}
\|f- F_n\|_{\cal P} \le  C\|f\|_{2,\infty} |\triangle|^2+ 2 \|\epsilon\|_{\cal P} + \frac{1}{\sqrt{N}} \sqrt{{\cal E}_2(f)}
\end{equation}
for a  positive constant $C$ independent of $f$ and triangulation $\triangle$.  
\end{theorem}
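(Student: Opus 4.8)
The plan is to exploit three ingredients in turn: the optimality of the discrete least squares fit $F_n$ over the span of the LKB-splines, the linearity of the penalized least squares denoising map, and the standard ``basic inequality'' for penalized least squares. First I would fix a convenient competitor inside $\mathrm{span}\{LKB_{n,j}\}$. The natural candidate is $G:=\sum_{j=1}^{dn} g(t_j)\,LKB_{n,j}$, because the linear B-spline interpolant of $g$ has nodal coefficients $g(t_j)$, so $\sum_{j} g(t_j)KB_{n,j}=KB(f)_n$; since each $LKB_{n,j}$ is the penalized least squares denoising of $KB_{n,j}$ on the given sample set, linearity of the denoising map shows that $G$ is exactly the denoising of the data $\{KB(f)_n(x_i,y_i)\}$. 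Writing $\tilde f_i=f(x_i,y_i)+\epsilon_i$ for the noisy samples actually used, the minimality of $F_n$ gives $\|\tilde f-F_n\|_{\cal P}\le\|\tilde f-G\|_{\cal P}$, and two applications of the triangle inequality (adding and subtracting $f$ on each side and bounding $\|f-\tilde f\|_{\cal P}=\|\epsilon\|_{\cal P}$) yield
\begin{equation*}
\|f-F_n\|_{\cal P}\le 2\|\epsilon\|_{\cal P}+\|f-G\|_{\cal P}.
\end{equation*}
This isolates the noise term $2\|\epsilon\|_{\cal P}$, so the whole problem reduces to bounding $\|f-G\|_{\cal P}$.

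For this deterministic term I would use that $G$ minimizes the penalized least squares functional (\ref{PLS2D}) for the data $\{KB(f)_n(x_i,y_i)\}$, together with the uniform closeness of $KB(f)_n$ to $f$ from Theorem \ref{OptimalRate}. Comparing the value of the functional at $G$ against its value at a bivariate spline quasi-interpolant $Qf\in S^1_5(\triangle)$ of $f$, and discarding the nonnegative energy $\lambda\,{\cal E}_2(G)$ on the left, one obtains
\begin{equation*}
N\|G-KB(f)_n\|_{\cal P}^2\le N\|Qf-KB(f)_n\|_{\cal P}^2+\lambda\,{\cal E}_2(Qf).
\end{equation*}
The quasi-interpolant obeys the standard estimates $\|f-Qf\|_\infty\le C\|f\|_{2,\infty}|\triangle|^2$ and ${\cal E}_2(Qf)\le C\,{\cal E}_2(f)$; combining these with $\|f-KB(f)_n\|_\infty\le C|\triangle|^2$ from Theorem \ref{OptimalRate}, the triangle inequality for $\|\cdot\|_{\cal P}$, the choice $\lambda\approx 1$, and the elementary bound $\sqrt{a+b}\le\sqrt a+\sqrt b$, leads to
\begin{equation*}
\|f-G\|_{\cal P}\le \|f-KB(f)_n\|_{\cal P}+\|KB(f)_n-G\|_{\cal P}\le C\|f\|_{2,\infty}|\triangle|^2+\frac{1}{\sqrt N}\sqrt{{\cal E}_2(f)}.
\end{equation*}
Substituting this into the reduction of the first paragraph produces the claimed estimate.

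I expect the main obstacle to be the deterministic term rather than the noise bookkeeping. Two points need care. First, the penalized least squares comparison produces the thin-plate energy of the competitor $Qf$, so one must establish the stability estimate ${\cal E}_2(Qf)\le C\,{\cal E}_2(f)$, i.e.\ that quasi-interpolation does not inflate the energy; this is where the factor $1/\sqrt N$ enters, through dividing the penalty by $N$ and taking a square root. Second, and more delicate, the natural rate for $\|f-KB(f)_n\|_\infty$ in Theorem \ref{OptimalRate} is controlled by the smoothness of the \emph{outer} function $g$, whereas the target bound is phrased through $\|f\|_{2,\infty}$; reconciling these requires either restricting to the dense subclass whose outer function inherits $C^2$ regularity or absorbing the discrepancy into the constant $C$. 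Finally, the hypothesis that the denoising points and the discrete least squares points coincide is essential: it is precisely what makes $G$ simultaneously a legal competitor for the DLS problem and the penalized least squares denoising of $KB(f)_n$, so that the two optimality arguments can be chained.
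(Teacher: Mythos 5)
The manuscript does not actually prove Theorem \ref{newmain}: it is quoted from \cite{LaiShenKST21} and used as a black box, so there is no in-text proof to compare yours against. Judged on its own, your skeleton --- take the competitor $G=\sum_j g(t_j)\,LKB_{n,j}$, use linearity of the penalized least squares map to identify $G$ as the denoised $KB(f)_n$ (valid, since $L_g=\sum_j g(t_j)b_{n,j}$ gives $\sum_j g(t_j)KB_{n,j}=KB(f)_n$), reduce to $\|f-G\|_{\cal P}$ via optimality of $F_n$ and two triangle inequalities (which correctly produces the $2\|\epsilon\|_{\cal P}$), then run the basic inequality against a quasi-interpolant to extract $C\|f\|_{2,\infty}|\triangle|^2+\frac{1}{\sqrt N}\sqrt{{\cal E}_2(f)}$ --- is the natural reconstruction, and each of those individual steps is sound.

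The genuine gap is the one you flag at the end but do not close, and it cannot be ``absorbed into $C$.'' Your chain leaves behind the term $\|f-KB(f)_n\|_{\cal P}$ (it in fact appears twice: once directly in $\|f-G\|_{\cal P}\le\|f-KB(f)_n\|_{\cal P}+\|KB(f)_n-G\|_{\cal P}$, and once again inside $\|Qf-KB(f)_n\|_{\cal P}$). You assert $\|f-KB(f)_n\|_\infty\le C|\triangle|^2$ ``from Theorem \ref{OptimalRate},'' but that theorem controls this quantity by $(2d+1)\,\omega(g_f,1/n)$, i.e.\ by the modulus of continuity of the \emph{outer} function and the univariate knot spacing $1/n$ --- a parameter entirely independent of the triangulation mesh size $|\triangle|$ of $[0,1]^2$ that appears in (\ref{newestimate}). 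The hypothesis $f\in C^2([0,1]^2)$ gives no control on $g_f$ beyond continuity: the inner functions $\phi_q$ are only H\"older continuous and not differentiable, so $g_f$ does not inherit smoothness from $f$, and the leftover term is neither $O(|\triangle|^2)$ nor bounded by $\|f\|_{2,\infty}$. To make your route deliver (\ref{newestimate}) as stated you must either add a hypothesis on $g_f$ (e.g.\ restrict to the Kolmogorov--Lipschitz/H\"older class, which is the setting of \cite{LaiShenKST21}) and accept an extra $O(\omega(g_f,1/n))$ term, or choose a different competitor whose distance to $f$ at the sample points is controlled by $\|f\|_{2,\infty}|\triangle|^2$ alone --- for instance by showing that the penalized least squares spline fit of $f$'s own noisy data lies in, or is well approximated within, $\mathrm{span}\{LKB_{n,j}\}$; that is precisely the nontrivial content your argument bypasses. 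Two smaller points: obtaining the coefficient $1$ on $\frac{1}{\sqrt N}\sqrt{{\cal E}_2(f)}$ requires the energy stability ${\cal E}_2(Qf)\le{\cal E}_2(f)$ with constant one (or a loose absorption of $\lambda\approx 1$), and the bound ${\cal E}_2(Qf)\le C\,{\cal E}_2(f)$ itself needs a citation or proof, though both are standard.
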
 

In 4D, we sampled $11^4$ equally-spaced data across $[0,1]^4$ and fit a DLS approximation of a continuous function $f$ with 4D linear LKB-spline as basis, and we compute the RMSEs based on $26^4$ equally-spaced data across $[0,1]^4$. 
The following 10 testing functions across different families of continuous functions are used to check the approximation error.

\begin{eqnarray*} 
f_1 &=& (1+2x+3y+4u+5v)/15; \cr
f_2 &=& (x^2+y^2+u^2+v^2)/4; \cr
f_3 &=& (x^4+y^4+u^4+v^4)/4; \cr
f_4 &=& (\sin(x)\exp(y)+\cos(x)\exp(u)+\sin(x)\exp(v))/(3\exp(1)); \cr
f_5 &=& 1/(1+x^2+y^2+u^2+v^2); \cr 
f_6 &=& \sin(\pi x)\sin(\pi y)\sin(\pi u)\sin(\pi v); \cr
f_7 &=& (\sin(\pi(x^2+y^2+u^2+v^2))+1)/2;\cr 
f_{8} &=& \exp(-x^2-y^2-u^2-v^2);\cr
f_9 &=& \max(x-0.5)\max(y-0.5)\max(u-0.5)\max(v-0.5);\cr 
f_{10} &=& \max(x+y+u+v-2,0);
\end{eqnarray*}

In 6D, we sampled $6^6$ equally-spaced data across $[0,1]^6$ and fit a DLS approximation of a continuous function $f$ with 6D linear LKB-splines, and we compute the RMSEs based on $11^6$ equally-spaced data across $[0,1]^6$. The following 10 testing functions across different families of continuous functions are used to check the approximation error. 

\begin{eqnarray*} 
f_1 &=& (1+2x+3y+4z+5u+6v+7w)/28; \cr
f_2 &=& (x^2+y^2+z^2+u^2+v^2+w^2)/6; \cr 
f_3 &=& (x^3y^3+x^3z^3+y^3z^3+x^3u^3+u^3v^3+v^3w^3)/6; \cr
f_4 &=& (\sin(x)e^y+\cos(x)e^z+\sin(x)e^u+\cos(y)e^v+\sin(x)e^w)/(5e); \cr
f_5 &=& 1/(1+x^2+y^2+z^2+u^2+v^2+w^2); \cr 
f_6 &=& \sin(\pi x)\sin(\pi y)\sin(\pi z)\sin(\pi u)\sin(\pi v)\sin(\pi w); \cr
f_7 &=& (\sin(\pi(x^2+y^2+z^2+u^2+v^2+w^2))+1)/2; \cr 
f_8 &=& \exp(-x^2-y^2-z^2-u^2-v^2-w^2); \cr
f_9 &=& \max(x-0.5)\max(y-0.5)\max(z-0.5)\max(u-0.5)\max(v-0.5)\max(w-0.5);\cr 
f_{10} &=& \max(x+y+z+u+v+w-3,0);
\end{eqnarray*}

\begin{table}[t]
	\centering 
	\caption{RMSEs (computed based on $26^4$ equally-spaced locations) of the DLS fitting based $11^4$ equally-space sampled data and the DLS fitting based on pivotal locations in 4D, where $128, 241, 531$ are numbers of pivotal locations.} \label{tab1}
	
	\begin{tabular}{c|cc|cc|cc}
    \toprule
		& \multicolumn{2}{|c|}{$n=100$} & 
    \multicolumn{2}{|c|}{$n=300$} &
    \multicolumn{2}{c}{$n=1000$} \cr 
    \cmidrule{2-7}
  \# Sampled Data & $11^4$ & $128$ & $11^4$ & $241$ & $11^4$ & $531$ \cr
  \midrule
		$f_1$ & 3.06e-04 & 8.90e-04 & 6.02e-05 & 4.24e-04 & 2.79e-06 &  9.86e-06 \cr    
  $f_2$ & 9.70e-04 & 2.75e-03 & 4.35e-04 & 1.63e-03 & 2.66e-04 & 5.85e-04 \cr
  $f_3$ & 4.00e-03 & 1.13e-02 & 1.87e-03 & 6.88e-03 & 1.12e-03 & 2.32e-03 \cr 
  $f_4$ & 5.86e-04 & 1.88e-03 & 3.23e-04 & 1.45e-03 & 1.62e-04 & 4.31e-04 \cr 
  $f_5$ & 1.39e-03 & 3.63e-03 & 4.76e-04 & 1.80e-03 & 2.67e-04 & 7.07e-04 \cr 
  $f_6$ & 3.40e-02 & 1.07e-01 & 1.33e-02 & 7.80e-02 & 3.96e-03 & 2.24e-02 \cr 
  $f_7$ & 9.75e-02 & 3.07e-01 & 4.13e-02 & 1.96e-01 & 1.57e-02 &  5.40e-02 \cr 
  $f_8$ & 1.54e-03 & 3.78e-03 & 6.28e-04 & 2.55e-03 & 3.58e-04 & 8.85e-04 \cr 
  $f_9$ & 3.51e-04 & 1.29e-03 & 1.80e-04 & 1.56e-03 & 1.03e-04 & 5.59e-04 \cr 
  $f_{10}$ & 2.53e-02 & 5.32e-02 & 1.96e-02 & 8.25e-02 & 1.40e-02 & 3.45e-02 \cr 
  \bottomrule		
	\end{tabular}
\end{table}   

	
	
		

\begin{table}[t]
	\centering 
	\caption{RMSEs (computed based on $11^6$ equally-spaced locations) of the DLS fitting based $6^6$ equally-space sampled data and the DLS fitting based pivotal locations in 6D, where $13, 24, 70$ are the numbers of pivotal locations.} \label{tab2}
	
	\begin{tabular}{c|cc|cc|cc}
    \toprule
		& \multicolumn{2}{|c|}{$n=20$} & 
    \multicolumn{2}{|c|}{$n=40$} &
    \multicolumn{2}{c}{$n=120$} \cr 
    \cmidrule{2-7}
  \# Sampled Data & $6^6$ & $13$ & $6^6$ & $24$ & $6^6$ & $70$ \cr
  \midrule
		$f_1$ & 5.09e-02 & 7.81e-02 & 3.03e-02 & 5.52e-02 & 7.78e-03 & 3.61e-02 \cr    
  $f_2$ & 4.56e-02 & 1.31e-01 & 4.09e-02 & 8.30e-02 & 1.73e-02 & 5.31e-02 \cr
  $f_3$ & 9.70e-02 & 1.29e-01 & 5.26e-02 & 8.39e-02 & 2.85e-02 & 5.09e-02 \cr 
  $f_4$ & 7.50e-02 & 2.50e-01 & 7.27e-02 & 1.50e-01 & 3.59e-02 & 1.23e-01 \cr 
  $f_5$ & 5.44e-02 & 1.88e-01 & 4.98e-02 & 1.22e-01 & 2.72e-02 & 7.25e-02 \cr 
  $f_6$ & 3.95e-02 & 8.07e-02 & 3.55e-02 & 6.14e-02 & 1.64e-02 & 4.45e-02 \cr 
  $f_7$ & 2.50e-02 & 8.71e-02 & 2.40e-02 & 6.46e-02 & 9.08e-03 & 3.94e-02 \cr 
  $f_8$ & 8.84e-02 & 9.39e-02 & 6.83e-02 & 7.39e-02 & 4.62e-02 & 5.40e-02 \cr 
  $f_9$ & 3.47e-01 & 3.55e-01 & 2.30e-01 & 2.56e-01 & 1.04e-01 & 1.86e-01 \cr 
  $f_{10}$ & 3.12e-02 &  7.66e-02 & 2.37e-02   & 5.65e-02 & 1.25e-02  & 3.94e-02 \cr 
  \bottomrule		
	\end{tabular}
\end{table}

In addition, we noticed that  the linear system associated with the DLS approximation has many zero or near zero columns due to the structure inner functions. 
As discussed in \cite{LaiShenKST21,S24}, we adopt the matrix cross approximation in \cite{ALS} to find the pivotal point set.  Based on the function values at the pivotal points in 
$[0, 1]^d$, we can simply solve the subsystem with much smaller size to find the approximation of $f$. Similar RMSEs are obtained and presented in Table~\ref{tab1} and \ref{tab2} side by side to show
that the approximation of $f$ for both approaches works well. More importantly, the approximation based on the function values over pivotal locations is similar to the approximation 
based on the $11^4$ equal-spaced function values. 

We plot the approximation error based on the pivotal location against $n$ on the log-log scale, hence the exponent of $n$ in the approximation rate is associated with the slope of the fitted line via linear regression. The results are shown in Figure~\ref{ConvgPlots}. For example, if a fitted line with slope approximately equal to $-0.5$, that indicates the approximation rate is about $\mathcal{O}(1/n^{1/2})$. This gives us a way to numerically estimate the exponent $\beta$ such that the outer function $g_f\in C^{0,\beta}([0,d])$. 

We also plot the number of pivotal locations needed to achieve those approximation errors, the results are shown in Figure~\ref{NNrPlots}. It shows that we only need fewer than $\mathcal{O}(nd)$ function values of $f$ to achieve the convergence rate $\mathcal{O}(1/n^{\beta})$ for some $\beta\in (0,1)$, or $\mathcal{O}(1/n)$, or $\mathcal{O}(1/n^2)$ based on the smoothness of outer function $g_f$.

\vspace{2mm}
\begin{remark}
    In general, there is no easy way to determine in theory which $\beta$ such that $g_f\in C^{0,\beta}([0,d])$. Instead, our approach can serve as a numerical approach to estimate the Hölder exponent $\beta$ for the outer function $g_f$.
\end{remark}

\vspace{2mm}

\begin{remark}
The primary computational burden for the results in Tables~\ref{tab1} and \ref{tab2} lies in the denoising step, where KB-splines are transformed into LKB-splines. This process requires a large number of data points and values due to the pervasive presence of noise over $[0, 1]^d$. As the dimensionality $d > 2$ increases, the number of required points and KB-spline values grows exponentially—such as in the tensor product spline method for denoising—resulting in computational costs that suffer from the curse of dimensionality. However, this denoising step can be pre-computed once for all and can be performed in parallel, significantly mitigating its impact on runtime. Once the LKB-splines are obtained, we build up the least squares data fitting matrix $X_{data}$ which again can be done in parallel. Next we sort out the determinant of submatrices from $X_{data}$ to form a matrix cross
approximation based on the row index set $I$ which corresponds to the pivotal locations. Then 
the remaining computational cost is limited to solving a least squares problem based on the pivoting
locations and the associated function values, which is far less demanding.
\end{remark}

\begin{figure}[t] 
    \centering
    \begin{tabular}{cc}

    \scriptsize{$f=(1+2x+3y+4u+5v)/15$} & \scriptsize{$f=(1+2x+3y+4z+5u+6v+7w)/28$}  \\
   
    \includegraphics[width=0.45\textwidth]{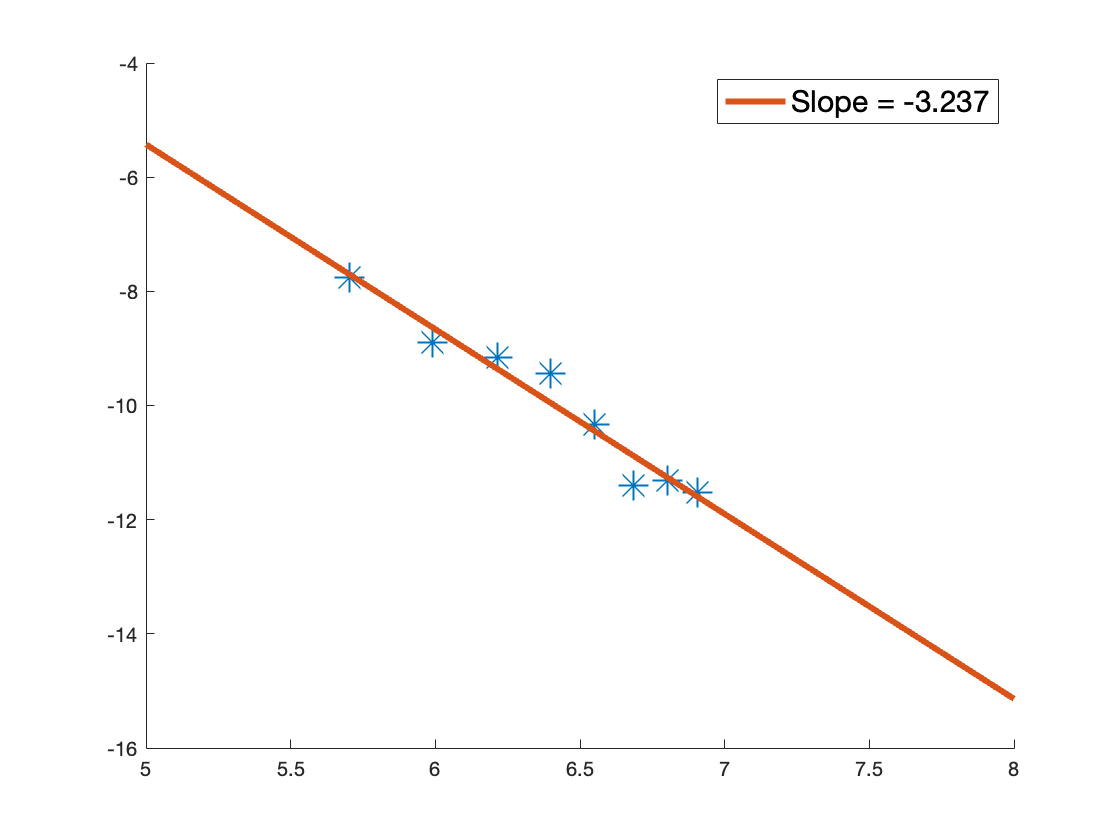} &
    \includegraphics[width=0.45\textwidth]{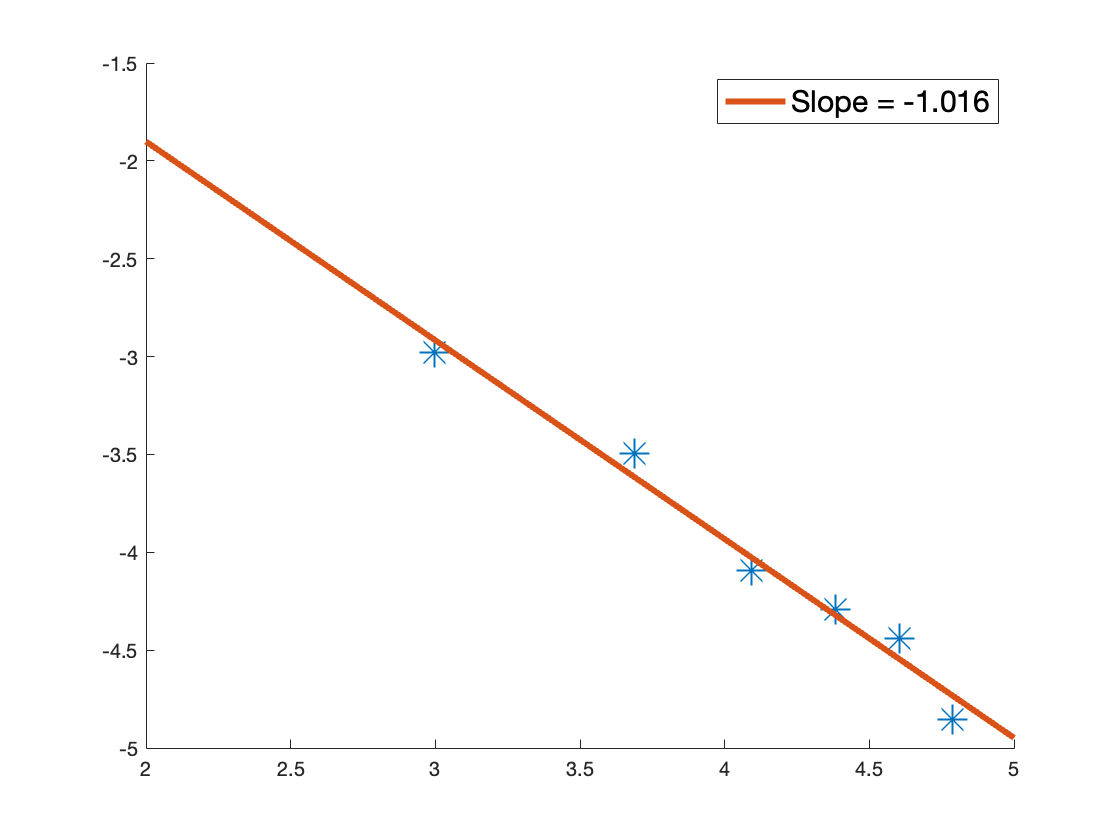}  \\

    \scriptsize{$f=\sin(\pi x)\sin(\pi y)\sin(\pi u)\sin(\pi v)$}& \scriptsize{$f=\sin(\pi x)\sin(\pi y)\sin(\pi z)\sin(\pi u)\sin(\pi v)\sin(\pi w)$} \\
    \includegraphics[width=0.45\textwidth]{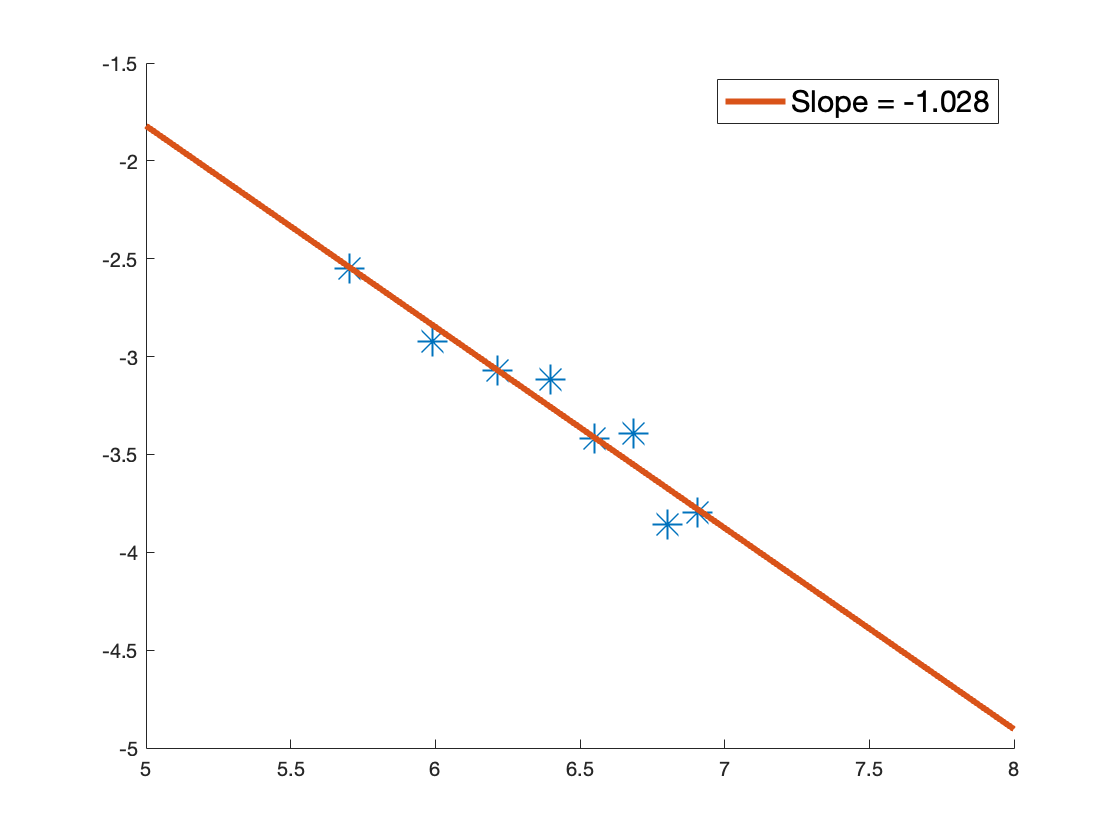} &
    \includegraphics[width=0.45\textwidth]{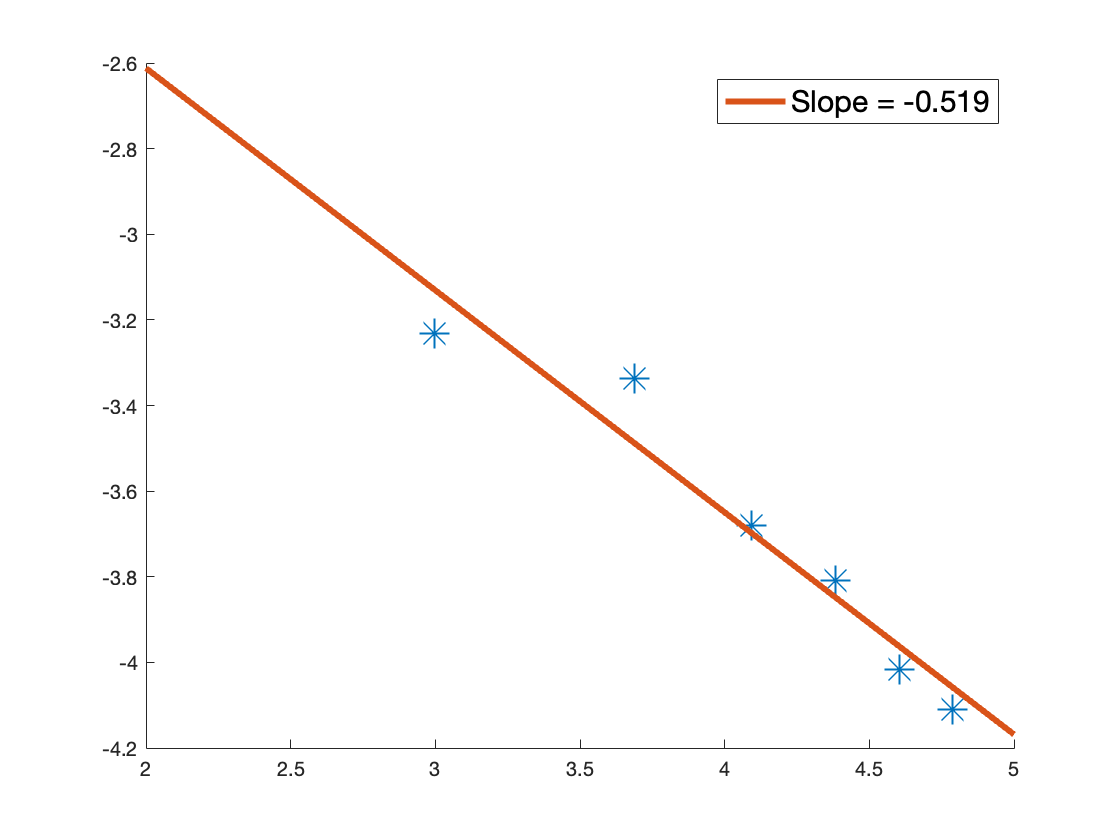} \\

    \scriptsize{$f=\max(x+y+u+v-2,0)$} & \scriptsize{$f=\max(x+y+z+u+v+w-3,0)$} \\
    \includegraphics[width=0.45\textwidth]{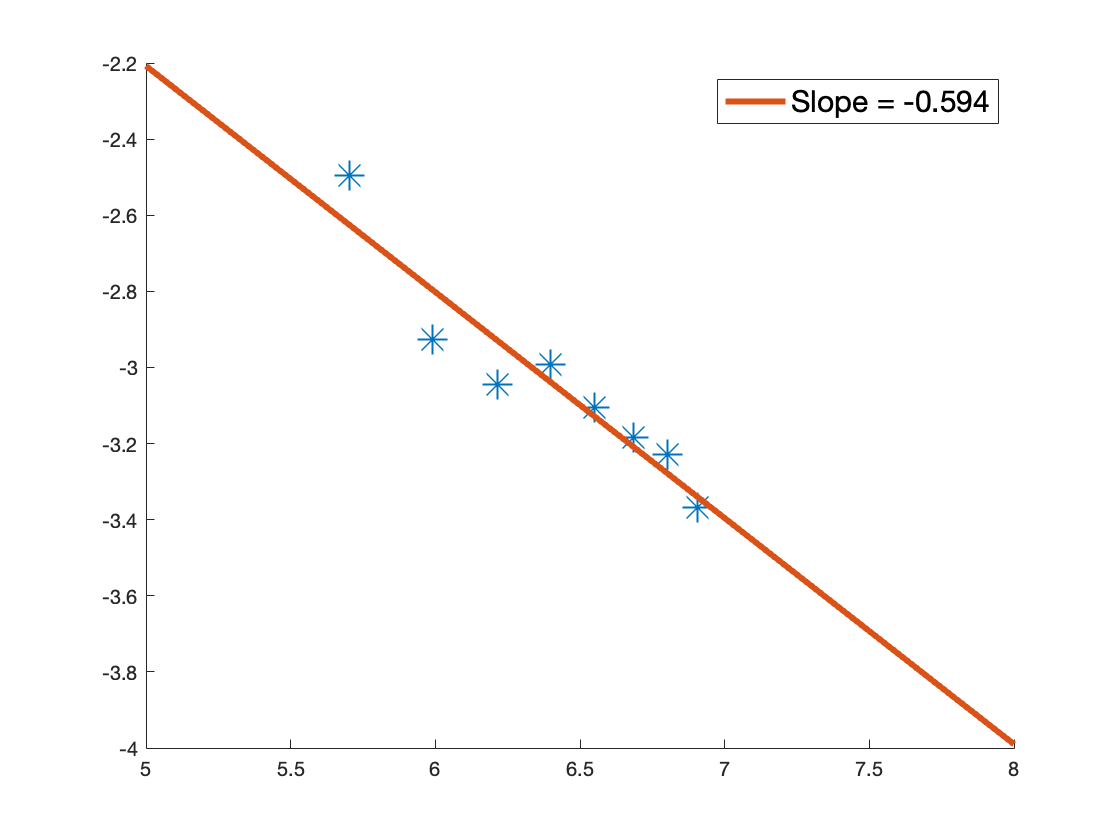} &
    \includegraphics[width=0.45\textwidth]{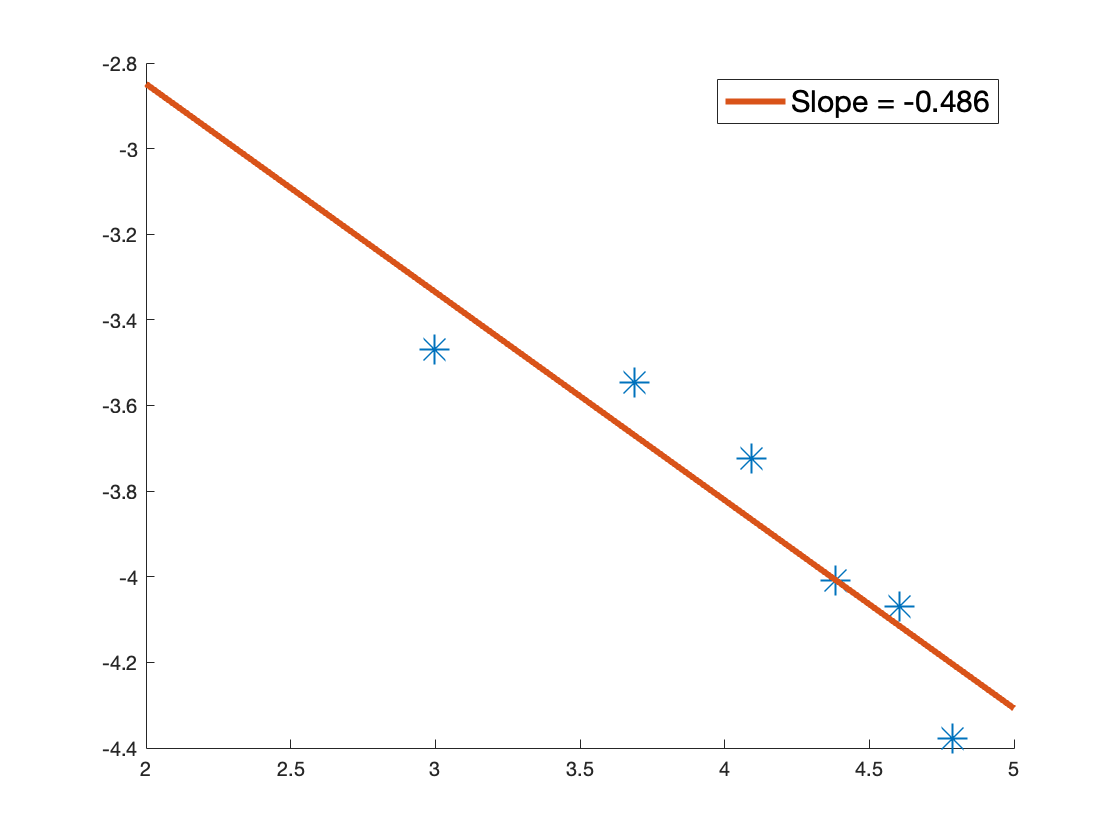} 
    \end{tabular}
    
    \caption{Plots of convergence rate on the Log-log scale in 4D and 6D based on pivotal dataset. \label{ConvgPlots}}
\end{figure} 

\begin{figure}[t] 
    \centering
    \begin{tabular}{cc}
    \includegraphics[width=0.42\textwidth]{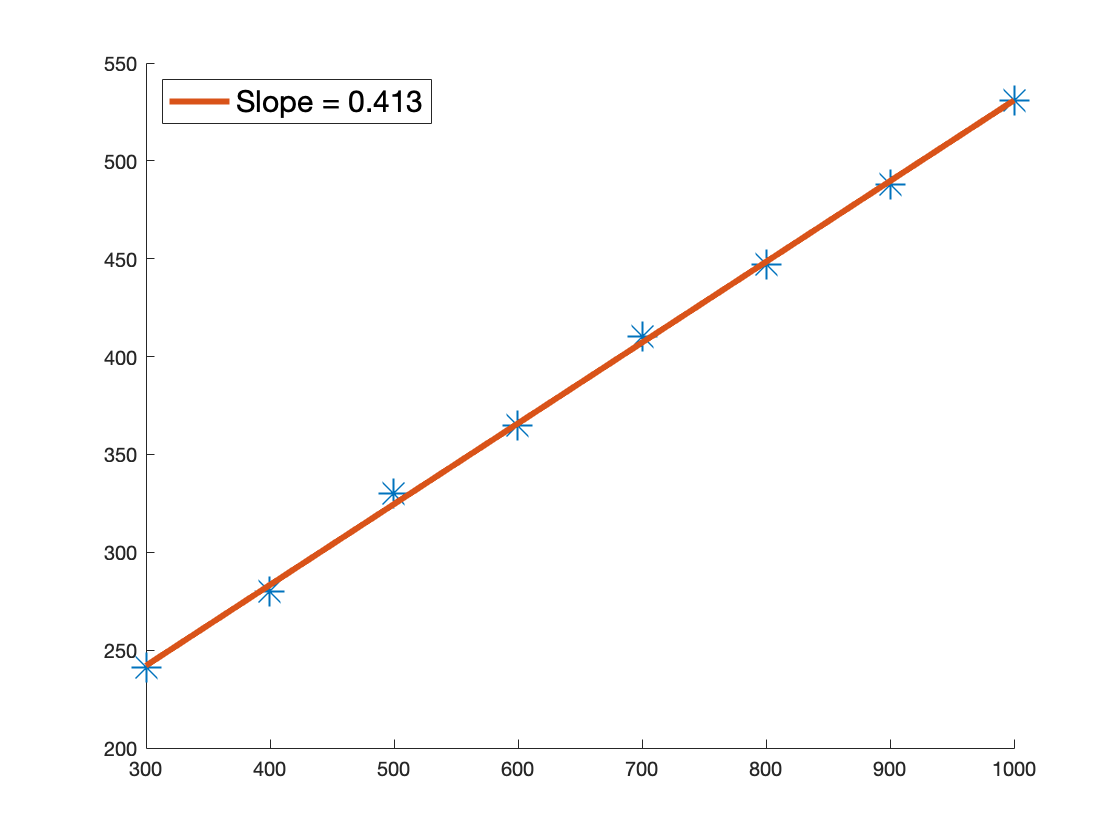} &
    \includegraphics[width=0.42\textwidth]{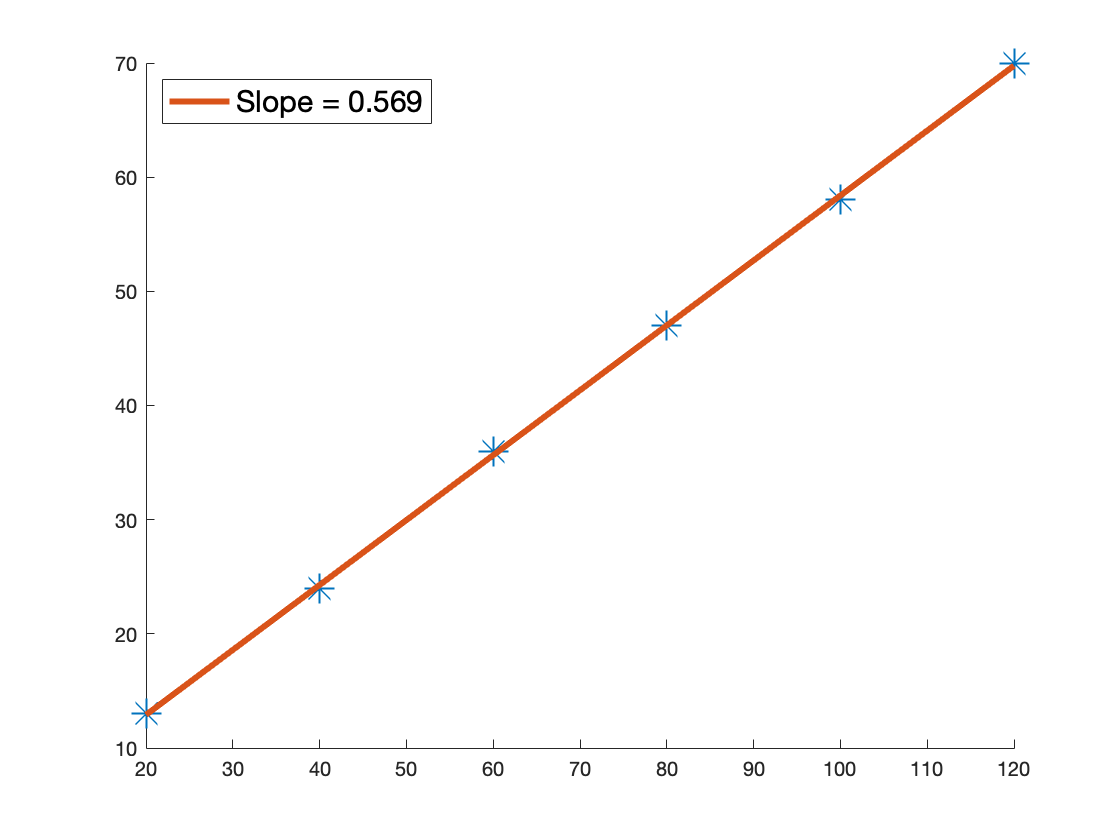}
    \end{tabular}
    
    \caption{Number of pivotal locations (vertical axis) against $n$ (horizontal axis) in 4D (left) and in 
    6D (right). \label{NNrPlots}}
\end{figure}

\section{Application to numerically solving the Poisson equation}
\label{secPDE}
Another powerful aspect of the LKB-splines based approximation scheme is that we can use it to solve partial differential equations. To start with, let
us consider the Poisson equation:
\begin{equation}
\label{PDE}
\left\{
\begin{array}{lll}
\Delta u &=& f, \bfx\in \Omega\cr
       u &=& 0, \bfx\in \partial \Omega.
\end{array}
\right.
\end{equation}
For simplicity, let us consider the 2D case where $\Omega=[0, 1]^2$. Let us use $f_i= LKB_{n,i}, i=1, \cdots, 2n$, as the right-hand side of (\ref{PDE}). 
We can use bivariate spline function of degree $d=8$ and $r=2$ to solve (\ref{PDE}) by using the spline collocation 
method as proposed in \cite{LL22} to obtain the solution $u_i$ 
$i=1, \cdots, 2n$. These form a set of basic Poisson solutions.  

Then for any $f$, we use $LKB_{n,i}, i=1, \cdots, 2n$, to 
approximate $f$. As discussed in the previous section, we can use $LKB_{n,i}$ to approximate $f$. Suppose
$\hat{f} = \sum_{i=1}^{2n} c_i(f) LKB_{n,i}$ is a good approximation of $f$, then the solution $u$ of the Poisson equation (\ref{PDE})
can be well approximated by  
\begin{eqnarray}
\label{ourmethod}
u_n= \sum_{i=1}^{2n} c_i(f) u_i.
\end{eqnarray}

To show $u-u_n$ goes to $0$ when $n \to \infty$, we consider $\|\Delta (u- u_n)\|_{L^2(\Omega)}$ 
which is $\| f- \sum_{i=1}^{2n} c_i(f) LKB_{n,i}\|_{L^2(\Omega)}$. Let us define a new norm $\|u\|_L$ on $H^2(\Omega)\cap H^1_0(\Omega)$ for the Poisson equation as: 
\begin{equation}
	\label{H2norm2}
	\|u\|_{L}= \|\Delta u\|_{L^2(\Omega)}.
\end{equation}
Lai and Lee in \cite{LL22} showed that the new norm is equivalent to the standard norm on 
Banach space $H^2(\Omega)\cap H^1_0(\Omega)$.  In particular, there exists some constant $A>0$ such that $A \|u \|_{H^2(\Omega)} \le \|u \|_L$. 

Now letting $\epsilon_1=  \| f- \sum_{i=1}^{2n} c_i(f) LKB_{n,i}\|_{L^2(\Omega)}$, 
The above inequality shows that
$$
\|u- u_n\|_{H^2(\Omega)}\le \epsilon_1/A.
$$    
As shown in Theorem~\ref{newmain}, $\epsilon_1\to 0$ if $n,N\to\infty$ and the triangulation size $|\triangle|\to 0$. Under some regularity assumption on $u$, one can further show 
\begin{eqnarray*}
		\|u-u_n\|_{L^2(\Omega)} \le C|\triangle|^2 \epsilon_1 \hbox{ and }
		\|\nabla (u-u_n)\|_{L^2(\Omega)} \le C|\triangle| \epsilon_1
\end{eqnarray*}
where $C=1/A$. We refer the interested readers to \cite{LL22} for more details.


\subsection{Numerical results}
For numerical experiments, we will use the following six functions as testing functions for the solution
of (\ref{PDE}). Their right-hand-side $f$ can be easily computed. 
\begin{eqnarray*}
u_1 &=& x(1-x)y(1-y)/4; \cr
u_2 &=& \sin(\pi x)\sin(\pi y); \cr
u_3 &=& \sin(x)(1-x)(1-y)\sin(y); \cr
u_4 & = &(x(1-x)y(1-y)/4)^2; \cr
u_5 & = & (\sin(\pi x)\sin(\pi y))^2; \cr
u_6 &=&  (\sin(x)(1-x)(1-y)\sin(y))^2; 
\end{eqnarray*}
We first use the linear LKB-splines to approximate the right-hand-side $f$ associated with $u_1,\cdots,u_6$ over $[0, 1]^2$. We sampled $101^2$ equally-spaced data across $[0,1]^2$ and fit a DLS approximation of a continuous function $f$ with LKB-splines as basis, and we check the RMSEs based on $1001^2$ equally-spaced data across $[0,1]^2$. See Table~\ref{tab3} for the numerical results.  


\begin{table}[t]
	\centering 
	\caption{RMSEs (computed based on $1001^2$ equally-spaced locations) of the approximation for the right-hand-side function $f=\Delta u$ based on equally-space sampled data and based on $59, 76, 136$ pivotal locations.} \label{tab3}
	
	\begin{tabular}{c|cc|cc|cc}
    \toprule
		& \multicolumn{2}{|c|}{$n=100$} & 
    \multicolumn{2}{|c|}{$n=300$} &
    \multicolumn{2}{c}{$n=1000$} \cr 
    \cmidrule{2-7}
  \# Sampled Data & $101^2$ & $59$ & $101^2$ & $76$ & $101^2$ & $136$ \cr
  \midrule
		$\Delta u_1$ & 4.90e-04 & 9.67e-04 & 2.46e-04 & 5.39e-04 & 1.01e-04 & 2.91e-04 \cr    
  $\Delta u_2$ & 3.04e-02 & 4.35e-02 & 1.31e-02 & 2.22e-02 & 3.90e-03 & 6.98e-03 \cr
  $\Delta u_3$ & 2.00e-03 & 3.80e-03 & 1.00e-03 & 2.30e-03 & 3.77e-04 & 1.10e-03 \cr 
  $\Delta u_4$ & 9.05e-05 & 1.32e-04 & 3.85e-05 & 8.59e-05 & 6.98e-06 & 1.86e-05 \cr 
  $\Delta u_5$ & 2.38e-01 & 4.26e-01 & 1.13e-01 & 1.53e-01 & 2.83e-02 & 6.06e-02 \cr 
  $\Delta u_6$ & 1.50e-03 & 2.20e-03 & 4.90e-04 & 9.49e-04 & 1.20e-04 & 3.17e-04 \cr 
  \bottomrule		
	\end{tabular}
\end{table}


Next, we compute the spline solution of
the Poisson equation for each LKB-spline as the right-hand side of the PDE (\ref{PDE}) 
to obtain $u_i$'s. 
As explained above, we use the coefficients of linear LKB-spline approximation of each right-hand-side function $f$ to form the solution of the Poisson equation.   We apply the method described
in (\ref{ourmethod}) to approximate the solution of the Poisson equation and the numerical results are shown
in Table~\ref{tab4}.  
Similarly, one can use LKB-splines to approximate the solution of the Poisson equation in 3D, we leave it to the future work.

So far we only explained how to use LKB-splines for approximating the solution of the Poisson equation with zero boundary conditions. The underlying domain of interest is $[0, 1]^2$. This is the  
simplest PDE.  We are currently investigating how to use the LKB-splines for the numerical solution of the Poisson equation over arbitrary polygons with nonzero Dirichlet boundary conditions.


\begin{table}[t]
	\centering 
	\caption{RMSEs (computed based on $1001^2$ equally-spaced locations) of the approximation for the true solution $u$ based on equally-spaced sampled data and based on $59, 76, 136$ pivotal locations.} \label{tab4}
	
	\begin{tabular}{c|cc|cc|cc}
    \toprule
		& \multicolumn{2}{|c|}{$n=100$} & 
    \multicolumn{2}{|c|}{$n=300$} &
    \multicolumn{2}{c}{$n=1000$} \cr 
    \cmidrule{2-7}
  \# Sampled Data & $101^2$ & $59$ & $101^2$ & $76$ & $101^2$ & $136$ \cr
  \midrule
		$u_1$ & 1.04e-06 & 1.57e-05 & 4.02e-07 & 5.97e-06 & 8.09e-08 & 1.92e-06 \cr    
  $u_2$ & 1.82e-04 & 1.20e-03 & 5.09e-05 & 6.95e-04 & 9.53e-06 & 1.05e-04 \cr
  $u_3$ & 4.56e-06 & 6.30e-05 & 1.82e-06 & 2.38e-05 & 3.23e-07 & 6.47e-06 \cr 
  $u_4$ & 2.11e-07  & 6.91e-07 & 6.71e-08 & 1.07e-06 & 8.34e-09 & 1.05e-07 \cr 
  $u_5$ & 1.80e-03 & 8.40e-03 & 5.98e-04 & 1.90e-03 & 1.26e-04 & 1.01e-03 \cr 
  $u_6$ & 4.30e-06 & 1.15e-05 & 9.51e-07 & 9.51e-06 & 1.50e-07 & 1.77e-06 \cr 
  \bottomrule		
	\end{tabular}
\end{table}   


The advantage of this approach is that 
the basic solutions $u_i$'s of the 
Poisson equation can be solved beforehand and stored and one only needs to approximate
the right-hand-side function. Note that
the right-hand-side function $f$ can be easily approximated
based on the pivotal point locations without using a large amount of the function values if $f$ is Kolmogorov-H\"older continuous. This approach provides an efficient method for solving PDE numerically.  When 
$f$ is Kolmogorov-H\"older continuous, our method based on LKB-splines will give a very good approximation of the right-hand-side function $f$ and hence, the solution of the Poisson equation as demonstrated in this section.

\section{Conclusion and future research}\label{secCon}

In this paper, we propose a novel approach for approximating $f\in C([0,1]^d)$ based on linear spline approximation of the outer function via Kolmogorov superposition theorem. By employing linear KB-spline approximation for the outer function, we demonstrate that the optimal approximation rate of $\mathcal{O}(1/n^2)$ can be achieved with the complexity $O(nd)$ for a dense subclass of continuous functions whose outer function is twice continuously differentiable. Additionally, the approximation constant for that subclass grows linearly with  $d$. Moreover, we introduce a tensor product spline denoising technique to obtain LKB-splines, enabling accurate function approximation in dimension $d=4$ and $d=6$. We further validate numerically that, for these dimensions, the proposed LKB-spline-based approach achieves the expected approximation rate and complexity. Finally, we apply this approximation scheme to solve the Poisson equation numerically, achieving highly accurate results.

Future research can be pursued along several promising directions. First, a complete characterization of the relationship between the original function and the outer function remains challenging, and we aim to further explore this aspect. Second, highly oscillatory functions, such as high-frequency trigonometric functions, are difficult to approximate effectively using KB-splines and LKB-splines. To address this, we plan to investigate the use of Fourier basis to construct Kolmogorov-Fourier functions. Third, we seek to extend this approach to the approximation of discontinuous functions. If successful, this could open the door to analyzing a wide range of real-world applications, such as image and signal processing, from a fresh perspective.

\backmatter

\bmhead{Supplementary information}

We have provided a sample of Matlab code as supplementary material to support our numerical results. Other parts of code are available upon request.




\bmhead{Acknowledgments}
The authors are very grateful to the editor and referees for their constructive comments to improve the quality of this paper.

\section*{Declarations}


\begin{itemize}
\item \textbf{Funding.} The first author is supported by the Simons Foundation for collaboration grant \#864439. 
\item \textbf{Competing interests.} On behalf of all authors, the corresponding author states that there is no conflict of interest.
\end{itemize}

\bibliographystyle{plain}
\bibliography{sn-bibliography}

\end{document}